\theoremstyle{definition}
\newtheorem{thm}{Theorem}[section]
\newtheorem{cor}[thm]{Corollary}
\newtheorem{lem}[thm]{Lemma}
\newtheorem{defin}[thm]{Definition}
\newtheorem{mainthm}[thm]{Main Theorem}
\newtheorem*{xrem}{Remark}
\numberwithin{equation}{section}
\newcommand{\subjclass}[1]{\bigskip\noindent\emph{2010 Mathematics Subject Classification:}\enspace#1}
\newcommand{\keywords}[1]{\noindent\emph{Keywords:}\enspace#1}
\newtheorem{proposition}{Proposition}[section]
\newtheorem*{remarks}{Remarks}  
\DeclareMathOperator{\cof}{cof}
\newcommand{\dd}{\, \mathrm{d}}
\DeclareMathOperator{\Det}{Det}
\DeclareMathOperator{\dist}{dist}       
\renewcommand{\div}{\operatorname{div}}
\newcommand{\eps}{\varepsilon}
\DeclareMathOperator{\imT}{im_T}
\DeclareMathOperator{\loc}{loc}
\newcommand{\N}{\mathbb{N}}             
\newcommand{\R}{\mathbb{R}}             
\newcommand{\weakc}{\rightharpoonup}
\newcommand{\weakcs}{\overset{*}{\rightharpoonup}}
\begin{document}


\baselineskip=17pt


\title{A lower bound for the void coalescence load in nonlinearly elastic solids}

\author{Victor Ca\~nulef-Aguilar\\
Facultad de Matem\'aticas, Pontificia Universidad Cat\'olica de Chile\\ Vicu\~na Mackenna 4860, Macul, Santiago, Chile\\
vacanulef@uc.cl \bigskip \\ 
Duvan Henao\\
Facultad de Matem\'aticas, Pontificia Universidad Cat\'olica de Chile\\ Vicu\~na Mackenna 4860, Macul, Santiago, Chile\\
dhenao@mat.puc.cl}

\date{20 June 2019}

\maketitle

\begin{abstract}
The problem of the sudden growth and coalescence of voids in elastic media is considered.
  The Dirichlet energy is minimized among incompressible and invertible Sobolev deformations
  of a two-dimensional domain having $n$ microvoids of radius $\eps$.
  The constraint is added that the cavities should reach at least certain minimum areas $\upsilon_1, \ldots, \upsilon_n$
  after the deformation takes place. They can be thought of as the current areas of the cavities during a quasistatic loading, 
  the variational problem being the way to determine the state to be attained by the elastic body in a subsequent time step.
  It is proved that if each $\upsilon_i$ is smaller than the area of a disk having a certain well defined radius, which is comparable
  to the distance, in the reference configuration, to either the boundary of the domain or the nearest cavity (whichever is closer),
  then there exists a range of external loads 
  for which
  the cavities opened in the body are circular in the $\eps\to 0$ limit. 
  In light of the results by Sivalonagathan \& Spector and Henao \& Serfaty  
  that cavities always prefer to have a circular shape (unless prevented to do so by the constraint of incompressibility),
  our theorem suggests
  that the elongation and coalescence of the cavities experimentally and numerically observed for large loads 
  can only take place after all the cavities have attained a volume comparable to the space they have available in the reference configuration.
  Based on the previous work of Henao \& Serfaty, who apply 
  the Ginzburg-Landau theory for superconductivity to the cavitation problem, 
  this paper shows how the study of the interaction of the cavities 
  is connected to the following more basic question: for what cavitation sites
  $a_1,\ldots, a_n$ and areas $v_1, \ldots, v_n$ 
  does there exist an incompressible and invertible deformation 
  producing cavities of those areas originating from those points. 
  In order to use the incompressible flow of Dacorogna \& Moser to answer that question,
  it is necessary to study first how do the 
  elliptic regularity estimates for the Neumann problem in domains with circular holes
  depend on the domain geometry.  
  
\subjclass{Primary 74B20; Secondary 74R99.}

\keywords{Coalescence; cavitation; nonlinear elasticity;  spherical symmetry; incompressibility;
Neumann problem; elliptic regularity}
\end{abstract}

\section{Introduction}

\subsection{Cavitation and spherical symmetry}
\label{se:physical_motivation}

Cavitation in solids is the sudden formation and expansion 
of cavities in their interior in response to large triaxial loads.
The first experimental studies in elastomers are due to 
Gent \& Lindley \cite{GeLi59}, who also theoretically estimated the hydrostatic load for rupture
by solving the non-linearised equilibrium equations for
an infinitely thick elastic shell under the assumption of radial symmetry.
The first analysis of the evolution of a cavity (beyond its nucleation) was due to Ball \cite{Ball82};
he showed that the one-parameter family of deformations
\begin{align} \label{eq:Ball-radial}
	u(x)=\sqrt[n]{|x|^n+L^n} \frac{x}{|x|},\quad L\geq 0,\quad n=2,3
\end{align}
constitutes a stable branch of weak solutions to the incompressible elasticity equations, 
which bifurcates from the homogeneous deformation at the dead-load
predicted by Gent \& Lindley.
The radial symmetry assumption, which persisted in this pioneering work,
was finally removed by M\"uller \& Spector \cite{MuSp95} and Sivaloganathan
\& Spector \cite{SiSp00}; they proved the existence of minimizers of
the elastic energy allowing for all sorts of cavitation configurations.
Lopez-Pamies, Idiart \& Nakamura \cite{LoIdNa11} and 
Negr\'on-Marrero \& Sivaloganathan \cite{NeSi12} discussed the onset of cavitation
under non-symmetric loadings. Mora-Corral \cite{Mora14} studied the quasistatic evolution of cavitation. 
We refer to \cite{Francfort18,Kumar18,Poulain18,Kumar18b}, the Introduction in \cite{HeSe13}, and the 
references therein for a more complete guide through the literature on this fracture mechanism.

The analyses \cite{SiSp10a,SiSp10b,HeSe13} and the numerical study \cite{LianLi11} suggest that 
the \emph{cavities inside an elastic body prefer to adopt a spherical shape} when pressurised 
by large and multiaxial external tensions, 
regardless of their shape and size at the onset of fracture (or in the rest state, if they  existed already). 
In particular, given any open $\mathcal B\subset \R^2$; any small $\eps>0$; 
any finite collection $a_1,\ldots,a_n\in \mathcal B$ of cavitation points;
and any incompressible 
deformation map $u: \mathcal B \setminus \bigcup_1^n \overline{B}_\eps(a_i) \to \R^2$
satisfying M\"uller \& Spector's invertibility condition (see Definition \ref{de:INV} and \cite{MuSp95});
using the arguments in \cite{HeSe13} it can be seen that
\begin{align} 
      \label{eq:ameliore}
 \int_{\mathcal B \setminus \bigcup_1^n \overline{B}_\eps(a_i)}
 \frac{|Du|^2 -1}{2} dx \geq 
 \sum_1^n v_i \log \frac{R}{\eps} + \sum_{i=1}^n v_i D_i^2 \log \frac{\min \{d_i, \sqrt{v_iD_i^2}\}}{\eps} - C
\end{align}
where $C$ is a universal constant
and
$R$, $d_i$, $v_i$, and $D_i$ respectively denote $R:=\dist(\{a_1, \ldots, a_n\}, \partial{\mathcal B})$;
the distance to the nearest cavitation point, or to $\partial \mathcal B$ should the outer boundary be closer to $a_i$; 
the area of a cavity coming from $B_\eps(a_i)$; and the Fraenkel asymmetry \cite{FuMaPr08} of the same cavity
(which measures how far is it from being a circle).
The first term on the right-hand side is the exact cost of a radially-symmetric cavitation; 
the prefactor of $|\log \eps|$ in the second term, on the other hand, 
is zero if and only if the cavities are circular.
This shows that it is very expensive to produce non-circular cavities, as stated above.
(A sketch of the proof can be found in Section \ref{se:distortion}.
The result is in 2D
but suggests that the same occurs in 3D elasticity.)

In spite of the previous energetic consideration,
\emph{if the external load is too large then an important
geometric obstruction frustrates the desire of producing only spherical cavities}. 
Although this is already explained in \cite{HeSe13}, let us briefly describe the situation. 
Consider again a body that is only two dimensional; that is furthermore a disk;
that is subject to the displacement condition 
$u(x) = \lambda x\ \forall\,x \in \partial B_{R_0}$,
for some $\lambda>1$ ($R_0$ being the domain radius); 
and that can open only two cavities. 
A necessary condition for circular cavities 
of areas $v_1$ and $v_2$ to be opened is that they be disjoint and
enclosed by the deformed outer boundary.
This is possible only when the sum 
$2\sqrt{\frac{v_1}{\pi}} + 2\sqrt{\frac{v_2}{\pi}}$ of their diameters is less than the outer diameter $2\lambda R_0$.
On the other hand,
if the body is incompressible (if none of its parts can change its area),
the areas occupied by the material after and before the deformation must coincide:
\begin{align}
      \label{eq:incompressibility_2cavs}
   \pi (\lambda R_0)^2 - (v_1+v_2) = \pi R_0^2 - O(\eps^2)
\end{align}
(the term of order $\eps^2$ accounts for the eventual preexisting microvoids). Hence,
the necessary condition reads 
\begin{align} 
      \label{eq:geom_obs}
  2\sqrt{v_1v_2}\leq \pi R_0^2 - O(\eps^2).
\end{align}
It follows, for instance,
that if $\lambda>\sqrt{2}$ then the body cannot open two circular cavities
of the same size.

The conflict between the geometric obstruction due to incompressibility and the energetic cost of distorted cavities 
raises the question of:\\
\centerline{\emph{What is the maximum load compatible with the opening of only spherical cavities?}}
In order to address this question, first we need to take the following into account.
It does not lead far to think of the load as just a scalar: it is more appropriate to consider the whole combination of the displacement condition 
at the outer boundary; the cavitation sites in the reference configuration; and the size that each cavity is expected to attain; as the load. Consider, 
for example, the following trivial observation:
Equations \eqref{eq:incompressibility_2cavs} and \eqref{eq:geom_obs} impose no limit on $\lambda$ if $v_1$ and $v_2$ are taken to be, respectively, 
as $(\lambda^2-1)\cdot \pi R_0^2 + O(\eps^2)$ and zero.
The obstruction arises when all the $n$ cavities grow from a size of order $\eps$ to a size of order 1,
which corresponds naturally to the situation in quasistatic and dynamic loadings
(once a cavity forms and grows it is not expected to shrink back;  
healing is possible, however, upon compression and/or unloading \cite{Francfort18,Kumar18,Poulain18,Kumar18b}).
\label{obstruction}

The pair $\Big ( (a_i)_{i=1}^n, (v_i)_{i=1}^n \Big )$,
composed of the cavitation sites in the reference configuration
and the areas that the cavities are expected to attain,
will be referred to as \emph{a cavitation configuration},
and will describe, as explained in the previous paragraph, 
what we will understand as the load being exerted on the incompressible body.
(The stretch factor $\lambda$ for the outer boundary is determined by the 
areas $v_i$, due to the incompressibility.)
There is a simple geometric condition that is necessary 
for a $2D$ incompressible body to be continuously transformed from its rest state
into a stretched state producing the given cavitation configuration, maintaining during the process
the circular shape of all the cavities:
\begin{align}
  \label{geometric_condition}
  \begin{gathered}
    \text{that an evolution of the domain exists in which the total enclosed area is preserved}
  \end{gathered}
\end{align}
(see Definition \ref{de:attainable}).
This paper's answer to the question of the previous paragraph is that 
\begin{center}
 \it As long as the external load fulfils the simple necessary
 geometric condition \eqref{geometric_condition},
 an incompressible and invertible deformation will always exist
 that opens round cavities
of the desired sizes at the desired sites. \end{center}
This is made explicit in Theorem \ref{th:round}.
For simplicity, the theorem is proved only in the case when
also the outer boundary of the domain is circular (not only the cavities).
However, it should be possible to extend all the analysis in this paper
to any smooth (or even Lipschitz) outer boundary.

\subsection{Void coalescence}
\label{se:Int_coalescence}

There is an extensive literature about the coalescence of voids in elastomers and in ductile materials.
On the experimental side, see, e.g., \cite{Gent91,Poulain17,PeCuSiEl06}.
On the numerical and modelling side, and restricting our attention, for concreteness, 
to the case of elastomers, 
see both
\cite{XuHe11,LianLi11,LianLi11JCPAM,LianLi12,Lefevre15},
which focus on the building-up of tension before coalescence
(only Sobolev maps are considered in the energy minimization),
and the SBV models \cite{HeMoXu16,Kumar18} 
(based on \cite{BoFrMa08}
and the analyses \cite{HeMo11,HeMo12,HeMo15,HeMoXu15}),
where the interaction can be followed all the way up to 
the nucleation and propagation of cracks.

What is observed during the quasistatic loading of a confined elastomer
is that 
cavities eventually lose their spherical shape 
as the load increases,
and begin to interact with other cavities until they merge into micro-cracks.
It follows that \emph{if for a certain load it is possible to prove that the cavities formed inside the body are close to spherical, 
then that load constitutes a lower bound for the load at which the voids begin to coalesce}.
For 2D neo-Hookean materials, such radial symmetry result can in fact be obtained, as shown by Henao \& Serfaty \cite{HeSe13},
using the methods and ideas developed for Ginzburg-Landau superconductivity. 
The existence question addressed in this paper, namely, that of determining 
for what loads there exists at least one deformation having finite energy and
opening only round cavities (regardless of whether it is energy minimizing), happens to play an important role in that
more complete radial symmetry statement. For those loads it can be shown that 
the cavities opened \underline{by the actual energy minimizers} are
also close to being circular. Consequently, 
\emph{by finding out a condition on the load sufficient to ensure that deformations with round cavities still exist 
(which is what we do in Theorem \ref{th:round}, as explained in the previous section),
we have, at the same time, obtained a lower bound for the coalescence load in the 2D neo-Hookean model.}
This is what lies behind Theorem \ref{th:main}.

Corollary \ref{co:illustrates} 
gives a sense of what is required of
a load (of a cavitation configuration)
in order to satisfy the geometric condition \eqref{geometric_condition} 
for the opening of only round cavities.
This motivates Theorem \ref{pr:example_attainable}, a modified version
of Theorem \ref{th:main} where a slightly more general (and more realistic)
variational problem is considered.
On the one hand, the theorem yields the more explicit lower bound
$$\lambda= \left ( 1- \max \frac{\sum_k \pi d_k^2}{\pi R_0^2}\right )^{-1/2}
$$
for the coalescence load in terms of the stretch at the outer boundary,
where $\pi R_0^2$ is the area of the initial domain $B_{R_0}(0)$ and the 
maximum is taken over all collections 
\begin{align*}
 B_{d_1}(a_1), B_{d_2}(a_2), \ldots, B_{d_n}(a_n)
\end{align*}
of disjoint disks, centered at the prescribed cavitation sites $a_1$, $a_2$, \ldots, $a_n$, contained in the initial domain.
On the other hand, the way in which the theorem follows from the construction used in the proof brings some evidence 
to the conjecture, implicitly present already in \cite{BaMu84,HeSe13}, 
that \emph{any given cavity will retain its spherical shape as long as its radius, after the deformation, remains smaller or comparable 
to the distance, in the undeformed configuration, to the nearest cavitation point
(or the outer boundary, if it is closer)},
and that \emph{no coalescence ought to take place until all the cavities have attained that critical size}.

\subsection{The flow of Dacorogna \& Moser}
\label{se:Int_DM}

As explained in the previous section, 
when a $2D$ incompressible elastic body $\mathcal B$ with initial voids $B_\eps(a_1)$, \ldots,
$B_\eps(a_n)$
is subject to a tensile radially symmetric stretch at the outer boundary, the free boundary problem
of determining the size and shape of the voids after the deformation is important 
in order to understand the nucleation of cracks in its interior. It was also mentioned that
for loads below a certain critical value the problem reduces
to the simpler problem discussed in Section \ref{se:physical_motivation}, namely,
solving the nonlinear equation of incompressibility
$$\det Du(x)=1 \quad\text{for a.e.\ }x\in {\mathcal B}_\eps:={\mathcal B}\setminus \bigcup_{i=1}^n \overline B_\eps(a_i)$$
under Dirichlet conditions of the form
\begin{gather*}
 u(x) = \lambda x,\quad x\in \partial \mathcal{B},\\
 u(a_i+\eps e^{i\theta})= \zeta_i + \sqrt{v_i/\pi + \eps^2} e^{i\theta},\quad i\in \{1,\ldots, n\},\quad \theta\in [0,2\pi],
\end{gather*}
where the centers $\zeta_i$ of the cavities after the deformation can be determined freely. 
As will be explained in Section \ref{se:coalescence},
\emph{it is important to prove the existence of not just any solution,
but of a family of solutions 
$\{\tilde u_\eps\}_{\eps>0}$ verifying the energy upper bound
\begin{align}
  \label{eq:Int_EUB}
  \int_{{\mathcal B}_\eps} \frac{|D\tilde u_\eps|^2}{2} dx\leq C 
  + \sum_{i=1}^n v_i |\log \eps|,
\end{align}
for some constant $C$ independent of $\eps$}.
This is achieved in Theorem \ref{th:round}.

To prove the above result,  following \cite{HeSe13},
\emph{we treat separately the regions adjacent to and far from the cavities}. A certain neighbourhood $B_{R_i}(a_i)$ is then 
assigned to each cavitation point $a_i$, inside which the maps $\tilde u_\eps$ are defined 
to be the unique incompressible and radially symmetric map
\begin{align*}
  a_i + re^{i\theta} \ \mapsto\ \zeta_i + \sqrt{v_i/\pi + r^2} e^{i\theta},\qquad \eps<r<R_i,\quad \theta\in [0,2\pi]
\end{align*}
expanding the $\eps$-cavity to an area of $v_i+\pi \eps^2$.
\emph{The main difficulty in the analysis in this paper 
is to harmoniously glue those radially symmetric cavitation maps
defined near the cavities
into a single deformation of the whole of $\mathcal B$ that continues to be 
injective and incompressible}, that is, 
to prove the existence of a bijection $u_{\text{far}}$ from $\mathcal B \setminus \bigcup_i \overline B_{R_i}(a_i)$
onto $(\lambda \mathcal B) \setminus \bigcup_i \overline B_{\sqrt{v_i/\pi+ R_i^2}}(\zeta_i)$
satisfying the Dirichlet conditions 
\begin{gather*}
 u_{\text{far}}(x) = \lambda x,\quad x\in \partial \mathcal{B},\\
 u_{\text{far}}(a_i+R_ie^{i\theta})= \zeta_i + \sqrt{v_i/\pi + R_i^2} e^{i\theta},\quad i\in \{1,\ldots, n\},\quad \theta\in [0,2\pi]
\end{gather*}
and the incompressibility constraint.
\emph{That such a bijection exists was proved for $n=2$ in \cite{HeSe13}; this paper extends that result to the case of an arbitrarily large number of cavities}
(Theorem \ref{th:scission-away}). It is by combining the lower bound 
and the ideas in \cite{HeSe13} with the upper bound \eqref{eq:Int_EUB}, obtained now for arbitrary $n$, that the main conclusions of this paper
(described in the previous sections)
are obtained
(see Theorems \ref{th:main} and \ref{pr:example_attainable}). 

The solution given in \cite{HeSe13} for $n=2$ was to construct explicit and carefully designed maps satisfying 
the above-mentioned requirements, but that technique 
is inapplicable in the presence of even just a third cavity. 
In order to explain our method of proof, recall first that
we restrict our attention to configurations  $\Big ( (a_i)_{i=1}^n, (v_i)_{i=1}^n \Big )$
satisfying the condition \eqref{geometric_condition} that a domain 
with cavities of areas $v_i$ originating at the points $a_i$
is attainable through an evolution of circular cavities. 
More precisely, there exists an evolution
$z_i(t)$ of the centers and an evolution $L_i(t)$ of the radii, with
$$1\leq t\leq \lambda,\quad z_i(1)=a_i, \quad L_i(1)=0,\quad \pi L_i(\lambda)^2 =v_i.$$
The final centers $\zeta_i$ will be chosen to be $\zeta_i:=z_i(\lambda)$.
This induces an evolution of the interface 
$$ \partial B_{z_i(t)}(r_i(t)), \quad r_i(t):=\sqrt{L_i(t)^2 + R_i^2}$$
between the region 
$$E(t):= (t\mathcal B) \setminus \bigcup_{i=1}^n B_{r_i(t)}(z_i(t))$$
far from the cavities and the region adjacent to cavity $i$
(see Section \ref{se:scission}).
\emph{To treat the case of many cavities here we solve the incompressibility equation using
the strategy of Dacorogna \& Moser \cite{DaMo90}}
consisting in:
\begin{itemize}
 \item finding first, for each $t$, a divergence-free velocity field $\hat v(y,t)$ defined for $y\in E(t)$;
 \item then defining $u_{\text{far}}(x)$ as the final point $f(x, \lambda)$ of the trajectory $f(x,t)$ obtained from the 
 flow equation 
 \begin{align}
  \label{eq:flow1}
    \frac{\partial f}{\partial t}(x,t) = \hat v(f(x,t),t), \quad 1\leq t\leq \lambda
 \end{align}
 with initial condition $f(x,1)=x$.
\end{itemize}

As will be explained in Section \ref{se:Schauder}, 
the proof that the resulting map $u_{\text{far}}$
has finite Dirichlet energy
(or, even more so, proving that $u_{\text{far}}$ is smooth),
as required in order to obtain the upper bound \eqref{eq:Int_EUB},
depends on establishing that the Lipschitz seminorm 
$\|D\hat v\|_{L^\infty(E(t))}$
is uniformly bounded with respect to the `time' parameter $t$.
The divergence-free velocity is obtained by solving two coupled Neumann problems for the Laplacian;
therefore, the problem reduces to understanding how do the constants in the elliptic regularity theory
for the Neumann problem
depend on the geometry of the domain.
\emph{The analysis ends once we confirm (in Theorem \ref{th:Schauder})
that the elliptic regularity constants do not blow up for families of domains $\{E(t)\}_t$ with circular holes
that remain sufficiently far from each other.}

\section{Notation and preliminaries}

\subsection{General notation}

We work in dimension two. The closure of a set is denoted by $\overline A$ and its boundary by $\partial A$. 
The open ball of radius $r>0$ centered at $x\in \R^2$ is denoted by $B_r(x)$.
The function $\dist$ indicates the distance from a point to a set, or between two sets.

Given a square matrix $A\in \R^{2\times 2}$, its determinant is denoted by $\det A$. The cofactor matrix 
$\cof A$ satisfies $(\det A)I=A(\cof A)^T$, where $I$ denotes de identity matrix.
If $A$ is invertible, its inverse is denoted by $A^{ -1}$. 
The inner (dot) product of vectors and of matrices will be denoted by $\cdot$. 
The Euclidean norm of a vector $x$ is denoted by $|x|$, and the associated matrix norm is also denoted by
$|\!\cdot\!|$.
Given $a,b\in \R^2$, the tensor product $a\otimes b$
is the $2\times 2$ matrix whose component $(i,j)$ is $a_ib_j$.

\subsection{Function spaces}

We fix a value of $\alpha\in (0,1)$ and work with the norms 
$\left\|f\right\|_{\infty}:=\sup|f(x)|$ and
\begin{align*}
 [f]_{0,\alpha}&:=\sup_{x\neq y}\frac{|f(x)-f(y)|}{|x-y|^{\alpha}}, 
 & 
 \left\|f\right\|_{0,\alpha} &:=\left\|f\right\|_{\infty}+[f]_{0,\alpha},
 \end{align*}
 \begin{align*}
 [f]_{1,\alpha}&:=\sup_{x\neq y}\frac{|Df(x)-Df(y)|}{|x-y|^{\alpha}},
 &
 \left\|f\right\|_{1,\alpha}&:=\left\|f\right\|_{\infty}+\left\|Df\right\|_{\infty}+[f]_{1,\alpha}.
\end{align*}
In the Neumann problems to be studied, the boundary data will be related to functions $g$ in:
$$C_{per}^{0,\alpha}:=\{ g\in C_{loc}^{0,\alpha}(\mathbb{R}): g \text{ is $2\pi$-periodic} \}.$$
The expression $u_{,\beta}$ stands for $\partial_{\beta} u=\frac{\partial u}{\partial x_\beta}$.

\subsection{Green's function}
\label{se:notation_Green}
The inversion of $x\in \R^2$ with respect to $B_R(0)$ is 
$x^{*}=\frac{R^2}{|x|^2}x.$ Set
$$\Phi(x) :=\frac{-1}{2\pi}\log(|x|), \quad
\phi^x(y):=\frac{1}{2\pi}ln(|y-x^{*}|)-\frac{|y|^2}{4\pi R^2},
\quad
G_{N}(x,y):=\Phi (y-x)-\phi^x(y).$$

\subsection{Poincar\'e's constant}
For any given open set $E\subset \R^2$, set
\begin{align}
  \label{def_C_P}
C_P(E):= \sup \left \{ \|\phi\|_{L^2(E)}: \phi \in H^1(E) \text{ s.t. } \|D\phi\|_{L^2(E)}=1
\ \text{and}\ \int_E \phi =0\right \}.  
\end{align}

\subsection{Topological image and condition INV}

We give a succint definition of the topological image (see \cite{HeSe13} for more details).
\begin{defin}
 \label{de:imT}
 Let $u\in W^{1,p}(\partial B_r(x), \R^2)$ for some $x\in \R^2$, $r>0$, and $p>1$. Then
 $$\imT(u, B_r(x)):=\{y\in \R^2: \deg (u, \partial B_r(x), y)\ne 0\}.$$
\end{defin}

Given $u\in W^{1,p}(E, \R^2)$ and $x\in E$, there is a set $R_x\subset (0,\infty)$,
 which coincides a.e.\ with $\{r>0: B_r(x)\subset E\}$, such that $u|_{\partial B_r(x)}\in W^{1,p}$
 and both $\deg (u, \partial B_r(x), \cdot )$ and $\imT(u, B_r(x))$ are well defined for all $r\in R_x$.

 \begin{defin}
 \label{de:INV}
 We say that $u$ satisfies condition INV if for every $x\in E$ and every $r\in R_x$
 \begin{enumerate}[(i)]
  \item $u(z)\in \imT(u, B_r(x))$ for a.e.\ $z\in B_r(x)\cap E$ and
  \item $u(z)\in \R^2 \setminus \imT(u,B_r(x))$ for a.e.\ $z\in E\setminus B_r(x)$.
 \end{enumerate}
\end{defin}
 
 If $u$ satisfies condition INV then $\{\imT(u,B_r(x)): r\in R_x\}$ is increasing in $r$ for every $x$.
 
 \begin{defin}
 Given $a\in E$ we define $$\imT(u,a):= \bigcap_{r\in R_a} \imT(u, B_r(a)).$$
 Analogously, if $u\in W^{i,p}$ is defined and satisfies condition INV in a domain of the form $E=\mathcal B
 \setminus \bigcup_1^n B_{r_i}(z_i)$, then 
 we define 
 $$\imT(u, B_{r_i}(z_i))= \bigcap_{\substack{r\in R_{z_i}\\ r>r_i}} \imT(u, B_r(z)).$$
 \end{defin}
 
\subsection{Distributional Jacobian}
\begin{defin}
 \label{de:DetDu}
 Given $u\in W^{1,2}(E,\R^2)\cap L^{\infty}_{\text{loc}}(E,\R^2)$ 
 its distributional Jacobian is defined as the distribution 
 $$ \langle \Det Du, \phi\rangle:= -\frac{1}{2}\int_E u(x)\cdot (\cof Du(x))D\phi(x)\dd x,\quad \phi\in C_c^{\infty}(E).$$
\end{defin}

\subsection{The cost of distortion}
\label{se:distortion}

We show how to adapt the proof of \cite[Prop.\ 1.1]{HeSe13} 
in order to obtain the refined estimate \eqref{eq:ameliore}
(which, as mentioned in the Introduction, shows clearly that round cavities 
are energetically preferred).

\begin{proof}[Proof of \eqref{eq:ameliore}]
Equations (3.3)-(3.4) in \cite{HeSe13} show that 
 \begin{align*}
  \int_{\mathcal B \setminus \bigcup \overline{B}_\eps(a_i)} \frac{|Du|^2-1}{2}dx
   \geq \sum_1^n v_i \log\frac{R/2}{n\eps}
     + C\int_{t_0}^{s_0} \left ( \sum_{B\in \mathcal B(t)} |E_B| D(E_B)^2 \right )\frac{dt}{t},
\end{align*}
where $t_0$, $s_0$, $\mathcal B(t)$ and $E_B$ are as in the proof of
\cite[Prop.\ 1.1]{HeSe13}
($E_B$ is an abbreviated notation for $\imT(u,B)$; 
it is the union of the the cavities opened from $B$ and of region occupied, in the deformed configuration, by the material points in $B$).
In the ball construction giving rise to the collection $\mathcal B(t)$, 
the radius $r(B)$ of every ball $B\in \mathcal B(t)$ is such that $r(B)\geq t/n$. 
Let $r_i$ be the radius of the largest among all the disks in the ball construction 
that are obtained as simple dilations of $B_\eps (a_i)$ (that is, before any merging 
process takes place). If $B_{d_i/2}(a_i)$ is disjoint with all balls 
in $\mathcal B(s_0)$, then there is no loss of generality in assuming that $r_i=\frac{d_i}{2}$.
If that were not the case, then $B_{r_i}(a_i)$ merges with other ball of $\mathcal B(t)$ precisely at `time' $t$.
Since $r\leq t$ holds true both for $r=r_i$ and for the radius of the other ball with which it merges, 
and since the other ball necesssarily contains other cavitation points, it follows that $d_i< 3t$. 
Since $r_i=r(B)$ for the ball $B=B_{r_i}(a_i)\in \mathcal B(t)$, by the observation at the beginning
of this paragraph we know that $r_i \geq t/n$. Therefore, $r_i \geq \frac{d_i}{3n}$. 
Taking this into account it can be seen that the above estimate can be replaced by 
$$
  \int_{\mathcal B \setminus \bigcup \overline{B}_\eps(a_i)} \frac{|Du|^2-1}{2}dx
   \geq \sum_1^n v_i \log\frac{R/2}{n\eps} + 
C\sum_{1}^n \int_{\eps}^{r_i} |E(a_i,r)| D(E(a_i, r))^2 \frac{dr}{r}. 
$$
By decreasing $r_i$, if necessary, it can be assumed that $r_i < \sqrt{\frac{v_iD_i^2}{24\pi}}$.
By virtue of \cite[Lemma 3.6.(ii)]{HeSe13}, this suffices to conclude that 
$|E(a_i,r)| D(E(a_i, r))^2\geq \frac{1}{2} v_iD_i^2$ for all $r\in (\eps, r_i)$. This completes the proof.
\end{proof}

\section{Attainable cavitation configurations}

As explained in the Introduction, given an initial domain radius $R_0$,
an arbitrary number $n$ of cavities,
cavitation sites $a_1, \ldots, a_n$,
target areas $v_1, \ldots, v_n$,
and a stretch factor $\lambda>1$,
we are interested on whether an incompressible body
occupying at rest the region $B_{R_0}(0)\setminus \bigcup_i \overline {B}_\eps(a_i)$
can be continuously stretched up to the point in which 
the outer boundary becomes $\partial B_{\lambda R_0}(0)$,
keeping the cavities always circular during the deformation.
A simple geometric necessary condition is that 
an evolution exists for the centers, the outer boundary radius, and
the cavity radii such that the total area enclosed by the intermediate
domains remains constant. This is expressed more precisely in the following
definition.

\begin{defin}
    \label{de:attainable}
    Let $n\in \N$, $R_0>0$, and ${\mathcal{B}}:=B_{R_0}(0)\subset \R^2$. We say that $\Big ( (a_i)_{i=1}^n, (v_i)_{i=1}^n \Big )$ 
    is a (cavitation) configuration attainable 
    through an evolution of circular cavities (or, more briefly, an \emph{attainable configuration})
    if $a_i \in{\mathcal{B}}$ and $v_i>0$ for all $i\in \{1,\ldots, n\}$,
    and  there exist evolutions
    \begin{itemize}
     \item $z_i\in C^1([1,\lambda], \R^2)$ of the cavity centers, and
     \item $L_i:[1,\lambda]\to [0,\infty)$ of the cavity radii,
    \end{itemize}
    where $\lambda$ is given by 
    \begin{align} \label{eq:defLambda}
\sum_{i=1}^n v_i = (\lambda^2-1)\pi R_0^2,
    \end{align}
    such that 
    \begin{align} \label{eq:incLi}
     \sum_{i=1}^n \pi L_i^2(t) = (t^2-1)\pi R_0^2\qquad \forall\, t\in [1,\lambda]
    \end{align}
     and for each $i\in \{1,\ldots, n\}$
    \begin{enumerate}[\upshape (i)]
    \item \label{it:Lsmooth}
      $L_i^2$ belongs to $C^1([1,\lambda],[0,\infty))$;
     \item $z_i(1)=a_i$ and $L_i(1)=0$;
     \item $\pi L_i^2(\lambda)=v_i$; and 
     \item \label{it:disjoint}
     for all $t\in [1,\lambda]$ the disks $\overline{B}_{L_i(t)}(z_i(t))$ are disjoint 
     and contained in $B_{tR_0}(0)$.
    \end{enumerate}
\end{defin}
  
Although other time parametrizations are of course possible for the evolution of the centers
and the radii in the above definition, we have chosen the stretch factor at the outer boundary
$\partial {\mathcal{B}}$ as our parameter.
Also, note that at the initial time $\lambda=1$ the radii $L_i(1)$ are being asked to be zero
instead of $\eps$; this is to ensure that the desired deformation
can be constructed for any small $\eps$.

\subsection*{Examples}

The following examples give a sense of 
what is required of a configuration $\Big ( (a_i)_{i=1}^n, (v_i)_{i=1}^n \Big )$
in order to be attainable through an evolution of circular cavities. We begin with a general result;
the more concrete examples are obtained as its 	corollaries.

\begin{lem} \label{le:sigma}
  Let $n\in \N$, $a_1,\ldots, a_n\in {\mathcal{B}}:=B_{R_0}(0)\subset \R^2$,  $v_1,\ldots, v_n>0$.
  Let $\lambda>1$ be such that $(\lambda^2-1)\pi R_0^2 = \sum v_i$. Set 
  \begin{align}
    \sigma = \min \left \{ \min_ i \frac{\left ( 1-\frac{|a_i|}{R_0}\right)^2}{\frac{v_i}{\sum v_k}},
    \min_{i\ne j} \frac{ |a_i-a_j|^2}{R_0^2 \left ( \sqrt{\frac{v_i}{\sum v_k}} + \sqrt{\frac{v_j}{\sum v_k}} \right )^2}
    \right \}.
  \end{align}
  Then both in the case $\sigma\geq 1$ and in the case
  when $\sigma<1$ 
  and $\lambda^2 < \frac{1}{1-\sigma}$
  the configuration $\Big ( (a_i)_{i=1}^n, (v_i)_{i=1}^n \Big )$
is attainable through an evolution of circular cavities.
\end{lem}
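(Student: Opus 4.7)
\medskip

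\noindent\textbf{Proof plan for Lemma \ref{le:sigma}.}

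The plan is to exhibit an explicit ansatz for the evolutions $z_i$ and $L_i$ and then check, by a short algebraic computation, that the four items of Definition \ref{de:attainable} reduce to a single inequality controlled precisely by $\sigma$. The natural guess is to distribute the growing area uniformly in proportion to the final area fractions, and to let the centers drift outward by a simple dilation. Concretely, I will set
\begin{equation*}
 L_i(t) := R_0\sqrt{t^2-1}\,\sqrt{\frac{v_i}{\sum_k v_k}}, \qquad
 z_i(t):=t\,a_i, \qquad t\in[1,\lambda].
\end{equation*}

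First I would verify items \eqref{it:Lsmooth}--(iii). Since $L_i^2(t)=R_0^2(t^2-1)\frac{v_i}{\sum_k v_k}$ is an affine function of $t^2$, it lies in $C^1([1,\lambda],[0,\infty))$; summing over $i$ gives \eqref{eq:incLi}; evaluating at $t=1$ gives $z_i(1)=a_i$ and $L_i(1)=0$; and evaluating at $t=\lambda$, together with the definition \eqref{eq:defLambda} of $\lambda$, gives $\pi L_i^2(\lambda)=v_i$. These steps are purely mechanical.

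The only substantive step is item \eqref{it:disjoint}. Containment of $\overline{B}_{L_i(t)}(z_i(t))$ in $B_{tR_0}(0)$ amounts to
\begin{equation*}
 t|a_i| + L_i(t) \leq tR_0 \iff \sqrt{1-\tfrac{1}{t^2}}\sqrt{\tfrac{v_i}{\sum_k v_k}}\leq 1-\tfrac{|a_i|}{R_0},
\end{equation*}
and disjointness of $\overline{B}_{L_i(t)}(z_i(t))$ and $\overline{B}_{L_j(t)}(z_j(t))$ amounts to
\begin{equation*}
 t|a_i-a_j|\geq L_i(t)+L_j(t) \iff \sqrt{1-\tfrac{1}{t^2}}\leq \frac{|a_i-a_j|}{R_0\bigl(\sqrt{v_i/\sum_k v_k}+\sqrt{v_j/\sum_k v_k}\bigr)}.
\end{equation*}
Squaring and comparing with the definition of $\sigma$, both conditions, for every pair $i\ne j$ and every index $i$, collapse to the single inequality
\begin{equation*}
 1-\tfrac{1}{t^2}\leq \sigma.
\end{equation*}
If $\sigma\geq 1$ this holds trivially for all $t\geq 1$. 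If $\sigma<1$, the hypothesis $\lambda^2<\frac{1}{1-\sigma}$ rearranges to $1-\frac{1}{\lambda^2}<\sigma$, and since $t\mapsto 1-\frac{1}{t^2}$ is increasing, the inequality holds throughout $[1,\lambda]$ as well. This finishes the verification.

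The only real obstacle is spotting the right ansatz: choosing $z_i(t)=ta_i$ (rather than $z_i\equiv a_i$) is essential, because keeping the centers fixed would force the disjointness condition to break down as soon as $\sqrt{t^2-1}>1$, even under the hypothesis $\sigma\geq 1$. The dilation factor $t$ in the centers exactly cancels the growth $\sqrt{t^2-1}$ in the radii so that the rescaled quantity $\sqrt{1-1/t^2}$ appears, which is bounded by $1$ and, more importantly, by any $\sigma>1-1/\lambda^2$. Beyond that geometric observation, the proof is routine algebra.
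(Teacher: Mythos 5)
Your proposal is correct and follows essentially the same route as the paper: the identical ansatz $z_i(t)=ta_i$, $L_i(t)=R_0\sqrt{(t^2-1)\,v_i/\sum_k v_k}$, with the verification of item \eqref{it:disjoint} reduced to the single inequality $1-t^{-2}<\sigma$ on $[1,\lambda]$. The only cosmetic point is that containment of the closed disks in the open ball $B_{tR_0}(0)$ and their disjointness require strict inequalities, which your argument in fact delivers since $1-t^{-2}<\sigma$ holds strictly in both cases.
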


\begin{proof}
  For every $t\in [1,\lambda]$ and every $i\in \{1,\ldots, n\}$ set 
  \begin{align}
    z_i(t):=ta_i, \quad L_i(t):=\sqrt{(t^2-1)\frac{v_i}{\sum v_k}}\cdot R_0.
  \end{align}
  We only need to check that the $\overline{B_{L_i(t)}(z_i(t))}$ are disjoint and contained in $B_{tR_0}(0)$ for all $t$
  (the remaining conditions in Definition \ref{de:attainable} are immediately verified).
  Both in the case $\sigma\geq 1$ and in the case $\sigma<1$ and $\lambda^2 < \frac{1}{1-\sigma}$
  we have that 
  $$ 1-\lambda^{-2} < \sigma.$$
  As a consequence, we obtain that 
  $$1-t^{-2}<\sigma\quad \forall\,t\in[1,\lambda].$$
  Hence, $$1-t^{-2} < \frac{\left ( 1-\frac{|a_i|}{R_0}\right)^2}{\frac{v_i}{\sum v_k}}\quad \forall\,i $$
  and 
  $$1-t^{-2} < \frac{ |a_i-a_j|^2}{R_0^2 \left ( \sqrt{\frac{v_i}{\sum v_k}} + \sqrt{\frac{v_j}{\sum v_k}} \right )^2}
  \quad \forall\,i\ne j.$$
  It is easy to see that the first inequality is equivalent to $$ L_i(t)^2 < t^2(R_0-|a_i|)^2$$
  which in turn says that $L_i(t)+|z_i(t)| < tR_0$ (i.e., each $\overline{B_{L_i(t)}(z_i(t))}\subset B_{tR_0}(0)$).
  Analogously, the second inequality is equivalent to $$(\sqrt{L_i(t)}+\sqrt{L_j(t)})^2< t^2|a_i-a_j|^2$$
  which in turn says that $L_i(t)+L_j(t) < |z_i(t)-z_j(t)|$ (i.e., the disks are disjoint). 
  This completes the proof.
\end{proof}

  In the case when $v_1=v_2=\cdots=v_n$,
 \begin{align} \label{eq:packing}
   \sigma &= \frac{\displaystyle n\pi \min\left \{ \min_i (R_0-|a_i|)^2, \min_{i\ne j} \left ( \frac{|a_i-a_j|}{2}\right )^2 \right \}}{\pi R_0^2}.
  \end{align}
  This is the packing density of the largest disjoint collection of the form $\{ B_{\rho}(a_i): i\in\{1,\ldots,n\}\}$
  contained in ${\mathcal{B}}$ (same $\rho$ for all $i$). There is an extensive literature on the famous circle packing problem; 
  for example, it is known \cite{Melissen94} that when $n=11$ the maximum packing density is  
  $$ \frac{11}{\left (1+\frac{1}{\sin \frac{\pi}{9}}\right )^2} \approx 0.7145,$$
  which yields the upper bound 
  $$\lambda < \sqrt{\frac{(1+\sin \frac{\pi}{9})^2}{1+2\sin \frac{\pi}{9}- 10\sin^2\frac{\pi}{9}}}\approx 1.8714
  $$
  for which our above construction is able to produce attainable configurations with 11 cavities of equal size.

\begin{cor}
  Given any $n\in \N$, $a_1,\ldots, a_n\in {\mathcal{B}}:=B_{R_0}(0)\subset \R^2$,
  and $1\leq \lambda < \frac{1}{\sqrt{1-\sigma}}$, where $\sigma$
  is the maximum packing density \eqref{eq:packing},
  it is possible to attain the configuration of cavities of equal size compatible with the boundary condition $u(x)=\lambda x$ for $x\in \partial B_{R_0}(0)$
  (namely, $\Big ( (a_i)_{i=1}^n, (v_i)_{i=1}^n \Big )$ with
  $v_1=\cdots = v_n = \frac{\pi R_0^2}{n}(\lambda^2 -1)$).
\end{cor}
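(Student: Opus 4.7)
The plan is to deduce the corollary directly from Lemma \ref{le:sigma} by showing that, when all target areas are equal, the parameter $\sigma$ appearing in Lemma \ref{le:sigma} coincides with the packing density defined in \eqref{eq:packing}, and then verifying that the hypothesis on $\lambda$ in the corollary matches the hypothesis of the lemma.

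First I would fix the values $v_1 = \cdots = v_n = \frac{\pi R_0^2}{n}(\lambda^2-1)$. Summing them gives $\sum_{k=1}^n v_k = (\lambda^2 - 1)\pi R_0^2$, so the relation \eqref{eq:defLambda} between $\lambda$ and the $v_i$'s required in Definition \ref{de:attainable} is automatically satisfied, and the $\lambda$ appearing in the corollary is precisely the $\lambda$ appearing in Lemma \ref{le:sigma}. In particular the ratio $v_i/\sum_k v_k$ equals $\frac{1}{n}$ for every $i$.

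Next I would substitute $v_i/\sum_k v_k = 1/n$ into the expression for $\sigma$ in Lemma \ref{le:sigma}. The first term becomes $n(1-|a_i|/R_0)^2 = \frac{n(R_0-|a_i|)^2}{R_0^2}$, while the pairwise term reduces to $\frac{n|a_i-a_j|^2}{4R_0^2} = \frac{n}{R_0^2}\left(\frac{|a_i-a_j|}{2}\right)^2$. Factoring out $n/R_0^2$ from both minima yields
\begin{equation*}
\sigma = \frac{n}{R_0^2}\min\left\{\min_i(R_0-|a_i|)^2,\ \min_{i\ne j}\left(\tfrac{|a_i-a_j|}{2}\right)^2\right\},
\end{equation*}
which is exactly the packing density in \eqref{eq:packing} (after cancelling the factor $\pi$).

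Finally I would match the hypotheses: the condition $1\le \lambda < 1/\sqrt{1-\sigma}$ stated in the corollary is, in the case $\sigma < 1$, equivalent to $\lambda^2 < 1/(1-\sigma)$, i.e.\ precisely the hypothesis of Lemma \ref{le:sigma}; and in the case $\sigma \ge 1$ the upper bound $1/\sqrt{1-\sigma}$ is to be read as $+\infty$, which is also covered by the lemma. In either case Lemma \ref{le:sigma} produces the evolutions $z_i(t)$, $L_i(t)$ required by Definition \ref{de:attainable}, proving the configuration is attainable. There is no real obstacle here beyond checking that the two definitions of $\sigma$ match; it is a bookkeeping corollary of Lemma \ref{le:sigma}.
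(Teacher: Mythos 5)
Your proposal is correct and matches the paper's own (implicit) argument: the paper states the corollary immediately after observing, exactly as you compute, that for equal areas $v_1=\cdots=v_n$ the quantity $\sigma$ of Lemma \ref{le:sigma} reduces to the packing density \eqref{eq:packing}, so the corollary is indeed just Lemma \ref{le:sigma} applied with $v_i=\frac{\pi R_0^2}{n}(\lambda^2-1)$. The only caveat, shared with the paper's statement itself, is the degenerate endpoint $\lambda=1$, where all $v_i=0$ and the lemma's hypothesis $v_i>0$ is vacuously outside its scope.
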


\begin{cor} \label{co:illustrates}
    Let $n\in \N$, $a_1,\ldots, a_n\in {\mathcal{B}}:=B_{R_0}(0)\subset \R^2$.
    If $d_1, \ldots, d_n>0$ are such that 
    the disks $\overline{B}_{d_i}(a_i)$ are disjoint and contained in $B_{R_0}(0)$,
    then the configuration $\Big ( (a_i)_{i=1}^n, (v_i)_{i=1}^n \Big )$, with 
    $$v_i=\pi d_i^2 \cdot \frac{1}{1-\frac{\sum \pi d_k^2}{\pi R_0^2}}\quad \forall i\in \{1,\ldots, n\},$$
    is attainable. 
\end{cor}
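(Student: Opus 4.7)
The plan is simply to apply Lemma \ref{le:sigma} to the prescribed data $(a_i)_{i=1}^n$ and $(v_i)_{i=1}^n$. First I would compute the stretch $\lambda$ associated with the given $v_i$'s. Writing $S := \sum_k \pi d_k^2$, the definition $v_i = \pi d_i^2/(1-S/(\pi R_0^2))$ gives $\sum_i v_i = \pi R_0^2 S/(\pi R_0^2 - S)$, and so $\lambda^2 - 1 = S/(\pi R_0^2 - S)$, which rearranges to the clean identity
\begin{align*}
 1 - \lambda^{-2} = \frac{\sum_k \pi d_k^2}{\pi R_0^2}.
\end{align*}
Moreover the ratios $v_i/\sum_k v_k = d_i^2/\sum_k d_k^2$ are free from the awkward prefactor.

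Next I would bound the quantity $\sigma$ from Lemma \ref{le:sigma} from below. Since $\overline{B}_{d_i}(a_i)$ is contained in the open disk $B_{R_0}(0)$, one has the strict inequality $R_0 - |a_i| > d_i$, whence
\begin{align*}
 \frac{(1-|a_i|/R_0)^2}{v_i/\sum_k v_k} = \frac{(R_0-|a_i|)^2}{R_0^2}\cdot \frac{\sum_k d_k^2}{d_i^2} > \frac{\sum_k d_k^2}{R_0^2} = 1-\lambda^{-2}.
\end{align*}
Similarly, since the closed disks $\overline{B}_{d_i}(a_i)$, $\overline{B}_{d_j}(a_j)$ are disjoint we have $|a_i - a_j| > d_i + d_j$, and hence
\begin{align*}
 \frac{|a_i-a_j|^2}{R_0^2\bigl(\sqrt{v_i/\sum_k v_k}+\sqrt{v_j/\sum_k v_k}\bigr)^2}
 = \frac{|a_i-a_j|^2}{R_0^2}\cdot \frac{\sum_k d_k^2}{(d_i+d_j)^2}
 > \frac{\sum_k d_k^2}{R_0^2} = 1-\lambda^{-2}.
\end{align*}
Taking the minimum over $i$ and over pairs $i\ne j$ I obtain $\sigma > 1 - \lambda^{-2}$ strictly.

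Finally I would split into cases: if $\sigma \geq 1$, Lemma \ref{le:sigma} applies directly; otherwise $\sigma<1$ and the strict inequality $\sigma > 1-\lambda^{-2}$ is equivalent to $\lambda^2 < 1/(1-\sigma)$, which is again the hypothesis of Lemma \ref{le:sigma}. Either way, the configuration is attainable. The only mild subtlety is ensuring the strictness of the two geometric inequalities, which is why I rely on the fact that \emph{closed} disks are assumed disjoint and contained in the \emph{open} ball $B_{R_0}(0)$; this promotes the $\geq$ given by the definition of $\sigma$ to the $>$ needed by Lemma \ref{le:sigma}, and is the only place where a small amount of care is required.
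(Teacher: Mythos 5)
Your proof is correct and follows essentially the same route as the paper: both reduce the statement to Lemma \ref{le:sigma} by computing $1-\lambda^{-2}=\sum_k \pi d_k^2/(\pi R_0^2)$ and using the strict inequalities $R_0-|a_i|>d_i$ and $|a_i-a_j|>d_i+d_j$ (from containment in the open ball and disjointness of the closed disks) to get $\sigma>1-\lambda^{-2}$, hence either $\sigma\geq 1$ or $\lambda^2<1/(1-\sigma)$. The only cosmetic difference is that the paper first derives the sufficient condition $\lambda^2\leq 1/(1-\sigma^*)$ for general proportional $v_i$'s and then takes the maximal $\lambda$, whereas you substitute the corollary's $v_i$ directly.
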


\begin{proof}
      We begin by noting that if $v_1,\ldots, v_n$
      are proportional to the areas of disks of radii $d_1,\ldots, d_n$ then 
      there is a simple sufficient condition for the hypothesis 
      $\lambda^2 < \frac{1}{1-\sigma}$ in Lemma \ref{le:sigma} to be satisfied. Indeed,
      suppose 
      \begin{align} \label{eq:proportional}
	\exists\, s>0\ \forall\, i \in \{1,\ldots, n\}\ v_i= \frac{s}{\displaystyle \sum_{k=1}^n \pi d_k^2} \pi d_i^2.
      \end{align}
      Then $\sigma > 1-\lambda^{-2}$ if and only if
      \begin{align*}
	    \forall i:\ \frac{\left (1-\frac{|a_i|}{R_0}\right )^2}{\frac{\pi d_i^2}{\sum \pi d_k^2}} > 1-\lambda^{-2}
	    \quad \text{and} \quad 
	    \forall i\ne j: \frac{|a_i-a_j|^2}{R_0^2 \frac{(d_i+d_j)^2}{\sum d_k^2}}> 1-\lambda^{-2}.
      \end{align*}
      This is equivalent to 
      \begin{align*}
       (1-\lambda^{-2})\frac{\pi R_0^2}{\sum_k \pi d_k^2} < \min \left \{ \min_i\ \left (\frac{R_0-|a_i|}{d_i}\right )^2,
       \min_{i\ne j} \frac{|a_i-a_j|^2}{(d_i+d_j)^2} \right \}. 
      \end{align*}
      The minimum on the right-hand side is greater than one because the disks $\overline{B}_{d_i}(a_i)$ are disjoint 
      and contained in $B_{R_0}(0)$. Hence, thanks to Lemma \ref{le:sigma},
      for $\Big ( (a_i)_{i=1}^n, (v_i)_{i=1}^n \Big )$ to be attainable it suffices  that 
      $(1-\lambda^{-2})  \leq \frac{\sum_k \pi d_k^2}{\pi R_0^2}$, i.e., 
      \begin{align} \label{eq:max_lambda}
	  \lambda^2 \leq  \frac{1}{1-\frac{\sum \pi d_k^2}{\pi R_0^2}}.
      \end{align}

      Recall that $\sum v_k = (\lambda^2-1)\pi R_0^2$,
      due to incompressibility. Since $\sum v_k=s$, the expression for $v_i$ in \eqref{eq:proportional}
      may be rewritten as 
      \begin{align} \label{eq:proportional2}
        v_i= (\lambda^2-1) \pi R_0^2 \frac{\pi d_i^2}{\sum \pi d_k^2}.
      \end{align} 
      The conclusion then follows by choosing the maximum value of $\lambda$ in \eqref{eq:max_lambda}.
\end{proof}

In the case of only one cavity, all loads are attainable, even if the cavitation point is close to the boundary.

\begin{proposition} \label{pr:n=1}
      All configurations with $n=1$ are attainable.
\end{proposition}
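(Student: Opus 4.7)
The plan is to reduce the problem to constructing a single $C^1$ center trajectory, since $n=1$ leaves no freedom in the cavity radius. With $n=1$, the area-balance equation \eqref{eq:incLi} forces $L_1(t) = R_0\sqrt{t^2-1}$ uniquely; then $L_1^2(t) = R_0^2(t^2-1)$ is a polynomial in $t$, so condition \eqref{it:Lsmooth} is immediate, $L_1(1)=0$, and $\pi L_1(\lambda)^2 = v_1$ by the defining relation \eqref{eq:defLambda}. What remains is to exhibit $z_1\in C^1([1,\lambda],\R^2)$ with $z_1(1)=a_1$ such that the containment condition \eqref{it:disjoint} holds; for a single cavity this reduces to the scalar inequality
\begin{align*}
  |z_1(t)| < \rho(t) := tR_0 - R_0\sqrt{t^2-1}, \qquad t\in[1,\lambda].
\end{align*}

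I would argue that such a $z_1$ exists by a simple cutoff construction. The function $\rho$ is continuous and strictly positive on the compact interval $[1,\lambda]$, with $\rho(1)=R_0$ and $\rho(\lambda)=R_0(\lambda-\sqrt{\lambda^2-1})>0$. Since $a_1\in B_{R_0}(0)$ (the open ball), we have $|a_1|<R_0=\rho(1)$, so by continuity there exists $\tau\in(1,\lambda]$ with $\rho(t)>|a_1|$ throughout $[1,\tau]$. Fix a smooth cutoff $\eta\in C^\infty([1,\lambda],[0,1])$ with $\eta\equiv 1$ on $[1,(1+\tau)/2]$ and $\eta\equiv 0$ on $[\tau,\lambda]$, together with any constant $m\in\bigl(0,\rho(\lambda)/(|a_1|+1)\bigr)$. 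The choice
\begin{align*}
  z_1(t) := \bigl(m+(1-m)\eta(t)\bigr)\, a_1
\end{align*}
then lies in $C^\infty\subset C^1$, satisfies $z_1(1)=a_1$, and verifies $|z_1(t)|<\rho(t)$ on $[1,\lambda]$: on $[1,\tau]$ one has $|z_1(t)|\le|a_1|<\rho(t)$ by the choice of $\tau$, while on $[\tau,\lambda]$ one has $|z_1(t)|=m|a_1|<\rho(\lambda)\le\rho(t)$. The degenerate case $|a_1|=0$ is handled trivially by $z_1\equiv 0$.

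The only subtlety worth flagging is a regularity issue at $t=1$: the bounding function $\rho$ is merely H\"older-$1/2$ there, with $\rho'(t)\sim -R_0/\sqrt{2(t-1)}$ as $t\to 1^+$, so one cannot construct a $C^1$ path whose magnitude tracks $\rho$ tightly all the way down to the initial time. The workaround is to exploit the positive slack $\rho(1)-|a_1|=R_0-|a_1|>0$ available at the start and to postpone all motion of $z_1$ until $t$ is bounded away from $1$, which is exactly what the cutoff $\eta$ accomplishes; once $t\ge(1+\tau)/2$, the function $\rho$ is smooth and bounded below by a positive constant, so shrinking $z_1$ smoothly fits comfortably inside the allowed region.
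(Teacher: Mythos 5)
Your proof is correct, and it takes a more cautious route than the paper's own one-line argument. The paper likewise uses the forced radius $L(t)=R_0\sqrt{t^2-1}$, but for the centre it simply exhibits the closed-form path $z(t)=(\lambda-\sqrt{\lambda^2-1})\,a$; as written this is constant in $t$ and does not meet the initial condition $z(1)=a$ of Definition \ref{de:attainable} (unless $a=0$), while the natural reading $z(t)=(t-\sqrt{t^2-1})\,a$ does satisfy $z(1)=a$ and the containment $|z(t)|+L(t)<tR_0$ but has derivative blowing up like $(t-1)^{-1/2}$ at $t=1$, i.e.\ it runs into exactly the endpoint-regularity obstruction you flag. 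Your construction --- freeze the centre at $a_1$ while $t$ is near $1$, where the slack $\rho(1)-|a_1|=R_0-|a_1|>0$ is available, and only afterwards shrink it smoothly to a point of modulus below $\rho(\lambda)$ --- yields a genuinely $C^\infty$ path with the correct initial value, so it buys an argument meeting the letter of the definition ($z_1\in C^1$, $z_1(1)=a_1$) where the paper's explicit formula is at best careless; the price is a slightly longer, softer proof instead of a one-line formula. Two cosmetic points: the inequality $\rho(\lambda)\le\rho(t)$ on $[\tau,\lambda]$ needs the one-line observation that $\rho(t)=R_0\big(t-\sqrt{t^2-1}\big)=R_0\big(t+\sqrt{t^2-1}\big)^{-1}$ is decreasing; and when $\rho(\lambda)>|a_1|+1$ your admissible range for $m$ contains values $m>1$, for which the intermediate claim $|z_1(t)|\le|a_1|$ on $[1,\tau]$ can fail --- harmless, since then $|z_1(t)|\le m|a_1|<\rho(\lambda)\le\rho(t)$ anyway, but it is cleanest to also impose $m\le 1$.
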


\begin{proof}
    Let $a\in B_{R_0}(0)$ and $\lambda>1$. We are to show that evolutions $t\in [1,\lambda] \mapsto z(t)$
    and $t\in [1,\lambda] \mapsto L(t)$ of the cavity's center and radius exist such that $z$ and $L^2$ are
    $C^1$, $z(1)=a$, $L(1)=0$, $\forall t: \pi L^2 (t)=(t^2-1)\pi R_0^2$, and $\forall t:\overline{B_{L(t)} (z(t))}\subset B_{tR_0}(0)$.
    It suffices to take $L(t):=\sqrt{t^2-1}R_0$ and $z(t):=(\lambda - \sqrt{\lambda^2-1}) a$, which are well defined
    actually for all $t\in [1,\infty)$.
\end{proof}

\section{Excision of holes off the domain}
\label{se:scission}

As mentioned in Section \ref{se:Int_DM},
in order to prove that for attainable load configurations
$\Big ( (a_i)_{i=1}^n, (v_i)_{i=1}^n \Big )$
there exist deformations maintaining the circular shape of the cavities 
(Theorem \ref{th:round}),
we proceed differently in the regions adjacent to the cavities
than in the connected region far from the cavities.
Near the cavities we work with the unique incompressible 
radially symmetric map that expands the $\eps$-cavity to an area
of $v_i+\pi \eps^2$. 
The analysis that follows is for the problem that remains, namely, 
for constructing an incompressible map far from the cavities.
The region near to cavity $i\in \{1,\ldots, n\}$ will end at a circumference centered
at $a_i$, the radius of which will be denoted by $R_i$.
The region far from the cavities:
$$E(1):= B_{R_0}(0)\setminus \bigcup_{i=1}^n \overline{B}_{R_i}(a_i),$$
will thus be 
a domain with holes that have ben cut from the original 
reference domain 
$${\mathcal B}_\eps= B_{R_0}(0)\setminus 
\bigcup_{i=1}^n \overline{B}_{\eps}(a_i).$$
The size of these holes is of order 1, as opposed to 
the initial cavities, which have radius $\eps$.

Let 
$$
    z_i:[1,\lambda]\to \R^2, \qquad L_i:[1,\lambda]\to [0,\infty), 
    \qquad i\in \{1,\ldots,n\}
$$
be the evolutions of the centers and of the cavity radii of 
Definition \ref{de:attainable}.
They induce the evolution 
\begin{align} \label{eq:domainsEt}
E(t) :=
B_{tR_0}(0)\setminus \bigcup_{i=1}^n \overline{B}_{r_i(t)} (z_i(t)),
\qquad 1\leq t\leq \lambda,
\end{align}
of the region far from 
the cavities, where the radii of holes evolve according to
\begin{align} \label{eq:defRiri}
 r_i(t):= \sqrt{ L_i(t)^2 + R_i^2}, \quad t\in [1,\lambda],\quad i\in \{1,\ldots, n\}.
\end{align}
The motivation for that expression is that:
\begin{itemize}
 \item  When $t=\lambda$
those are the radii that the holes must have
if
they are to coincide with the outer boundary of the image by 
the unique incompressible and radially symmetric map
defined on 
$$\{x: \eps<|x-a_i|<R_i\}$$
that enlarges the $\eps$-cavity to an area of $v_i+\pi\eps^2$.
\item The total area enclosed by $E(t)$ is always the same. Indeed, by virtue of \eqref{eq:incLi}, at all times $t$
$$
    |E(t)|= \pi (tR_0)^2 - \sum_{i=1}^n \pi r_i(t)^2 =
    \pi R_0^2 - \sum_{i=1}^n \pi R_i^2 = |E(1)|.
$$
\end{itemize}

We shall write 
$$z_0(t):=0,\qquad r_0(t):= tR_0, \qquad t\in [1,\lambda]$$
so that 
$$\partial E(t) = \bigcup_{i=0}^n \partial B_{r_i(t)}(z_i(t)).$$

So far nothing has been said regarding how the initial radii $R_i$ of the excised 
holes are to be chosen. 
They will only be required to be such that:
\begin{align} \label{eq:disjointEt}
    \text{at all times }
    t\in [1,\lambda],
    \text{ the holes }
    \overline{B}_{r_i(t)}(z_i(t))
    \text{ are disjoint and contained in }
    B_{tR_0}(0).
\end{align}
It is clear from
\eqref{eq:defRiri} and 
item (\ref{it:disjoint}) in 
Definition \ref{de:attainable}
that there exist radii 
$R_1, \ldots, R_n$
verifying that requirement
(by continuity).
As a consequence of \eqref{eq:disjointEt},
also using a continuity argument,
there exists $d>0$ such that 
\begin{align}
 \label{eq:disjoint_d}
 \begin{aligned}
 &\text{for all } t\in [1,\lambda]
 \text{ and all } i\in \{1,\ldots, n\}
 \ r_i(t)\geq d, \text{ and}
 \\
 &\text{for all } t\in [1,\lambda]
 \text{ the disks }\bigg \{\overline{B}_{r_i(t)+d}(z_i(t))\bigg \}_{i=1}^n
 \text{ are disjoint and contained in } B_{r_0(t)-d}(0).
\end{aligned}
\end{align}

Note finally that 
\begin{align} \label{eq:rmax}
  \text{for all } t\in [1,\lambda]
  \text{ and all } i\in \{1,\ldots, n\},
  \ r_i(t)\leq r_{max},
  \ \text{with } r_{max}:=\lambda R_0,
\end{align}
since
$$
  r_i(t)\leq r_0(t)=tR_0 < \lambda R_0.
$$

\section{Regularity of the Dacorogna-Moser velocity field}
\label{se:Schauder}

 \subsection{Definition of $v(y,t)$}

As mentioned in Section \ref{se:Int_DM},
the deformation  map $u$ away from the cavities, namely,
the part $u_{\text{far}}$ of $u$ that sends $E(1)$ onto $E(\lambda)$, 
is defined via the flow equation \eqref{eq:flow1}
using the velocity fields constructed 
 by Dacorogna \& Moser \cite{DaMo90}.
 In order to prove, in Theorem \ref{th:round}, that
 the resulting map lies in $H^1$, 
 we observe that 
 \begin{align} \label{eq:H1a}
   \frac{\dd}{\dd t} \int |D_xf(x,t)|^2\dd x
   &= \int D_xf(x,t)\cdot D_x \frac{\partial f}{\partial t} (x,t ) \dd x
   \\ &= \int D_xf(x,t)\cdot \Big (D_y\hat v(f(x,t),t)D_xf(x,t) \Big ),
  \end{align}
  whence 
  \begin{align} \label{eq:H1b}
    \frac{\dd}{\dd t} \int |D_xf(x,t)|^2\dd x \leq 
    \underbrace{(\sup_t \|D_y\hat v(\cdot, t)\|_{L^\infty(E(t))}  ) }_{:=C}
    \int |D_xf(x,t)|^2\dd x,
  \end{align}
  provided $\|D_y\hat v(\cdot, t)\|_{L^\infty(E(t))}$ is bounded with respect to $t$.
  This implies that $e^{-Ct} \int |D_xf(x,t)|^2$ decreases with $t$ and, consequently,
  $$\int _{E(1)}|Du_{\text{far}}|^2 dx\leq e^{C(\lambda-1)} \int |I|^2\dd x < \infty.$$
  Therefore, our aim is to prove that the spatial derivatives of the velocity field
  are bounded uniformly with respect to $t$.
  
  The field $\hat v$ will be obtained as the superposition of a field $v$
  that makes the excised holes grow with a field $\tilde v$ that translate
  the holes in order to change their centers to $z_i(t)$. 
  The first one will be defined as
  \begin{align}
    \label{eq:defDM}
    v(y,t):= D_y\phi_t(y) + D_y^\perp \psi_t(y),\quad t\in [1,\lambda], \quad y\in E(t),
  \end{align}
  where $\phi_t$ is the unique solution to
  \begin{align} \label{phi bvp}
\left\{ \begin{aligned}
&\Delta \phi_t=0\quad \text{in $ E(t)$,} \\
&\frac{\partial \phi_t}{\partial \nu}\Big (z_i(t) + r_i(t)e^{i\theta} \Big ) = 
\frac{\dd r_i(t) }{\dd t}
\quad \text{on $ \partial E(t)$}\\
& \int_{E(t)} \phi_t(y) dy =0
\end{aligned} \right. 
\end {align}
and $D^{\perp}\psi_t:=(\partial_{z_2}\psi,-\partial_{z_1}\psi)$ is a divergence-free covector field 
that cancels out the tangential parts of $D\phi_t$ on $\partial B_{r_i(t)}(z_i(t))$, for all $i\geq 1$. Concretely,
$$\psi_t(y):=\varphi_t(y)-\zeta\left(\frac{\dist(y,\partial E(t))}{d/2}\right)\varphi_t(q_t(y))$$ 
where $d$ satisfies \eqref{eq:disjoint_d},
$\varphi_t$ is the unique solution to
\begin{align} \label{varphi bvp}
\left\{ \begin{aligned}
&\Delta \varphi_t=0 \text{ in $ E(t)$,} \\
&\frac{\partial \varphi_t}{\partial \nu}=\frac{\partial \phi_t}{\partial \tau} \text{ on $ \partial E(t)$,}
\\
 &\int_{E(t)} \varphi_t(y) dy =0,
\end{aligned} \right. 
\end {align}
$\zeta$ is a cutoff function such that $0\leq \zeta  \leq 1$, $\zeta(0)=1$, and $\zeta(1)=0$,
and 
\begin{equation} \label{boundary proj}
q_t(z) :=
\left\{
	\begin{array}{ll}
		r_i(t)\frac{y-z_i(t)}{|y-z_i(t)|}+z_i(t)  & \mbox{if } |y-z_i(t)| <r_i(t)+\frac{d}{2} \\
		r_0(t)\frac{y}{|y|} & \mbox{if } |y| > r_0(t)-\frac{d}{2}.
	\end{array}
\right.
\end{equation}
The second field is defined as
\begin{align}
  \label{eq:defVtilde}
  \tilde v (y, t):= D_y^\perp w(y,t), \quad t\in [1,\lambda], \quad y\in \overline{E(t)},
\end{align}
with
$$ w(y,t):=\begin{cases}
            \eta \left ( \frac{r-r_i(t)}{d}\right ) \frac{\dd z_i(t)}{\dd t} \cdot (rie^{i\theta}),
            & \text{if } y=z_i(t) + re^{i\theta},\ r_i(t)\leq r < r_i(t) + d;
            \\
            0 & \text{in other case},
           \end{cases}
  $$
  the function $\eta$ being any $C_c^\infty([0,1])$ function such that $\eta(0)=1$
  and $\eta'(0)=0$. 
  
  In order to prove that $\|D_y \hat v(\cdot, t)\|_{L^\infty(E(t))}$ is bounded in time 
  we need to estimate 
  $$\|D^2_y\phi_t\|_{L^\infty(E(t))},
  \quad \|D^2_y\varphi_t\|_{L^\infty(E(t))},\quad \text{and}\quad 
  \|D^2_y w\|_{L^\infty(E(t))}.$$
  To this aim,
  in the rest of Section \ref{se:Schauder}
  we consider a generic domain with holes $E$ of the form 
  \begin{align} \label{eq:genericE}
    E:=B_{r_0}(z_0) \setminus \bigcup_{i=1}^n \overline{B}_{r_i}(z_i) \subset \R^2
  \end{align}
  satisfying, for some positive $d$ and $r_{max}$, that
\begin{align}
 \label{eq:d}
 \begin{aligned}
 &\text{for all }  i\in \{1,\ldots, n\}
 \ d\leq r_i\leq r_{max}, \text{ and}
 \\
 &
 \text{the disks }\bigg \{\overline{B}_{r_i+d}(z_i)\bigg \}_{i=1}^n
 \text{ are disjoint and contained in } B_{r_0-d}(z_0).
\end{aligned}
\end{align}
Also, with an abuse of notation, 
we study the generic Neumann problem 
\begin{align} \label{eq:defNeumann}
\begin{cases}
\Delta u =0 \text{ in } E,\\[0.5em]
\displaystyle \frac{\partial u}{\partial \nu}=g
 \text{ on } 
\partial E,
\\[0.5em]
\displaystyle \int_{E}u(y)dy=0,
\end{cases}
\end{align}
where the Neumann datum $g$ verifies the compatibility requirement:
\begin{equation} 
\int_{\partial B_{r_0}(z_0)}g=\sum_{k=1}^{n}\int_{\partial B_{r_k}(z_k)}g, \label{nec}
\end{equation}
with a view towards estimating $\|D^2u\|_{L^\infty(E)}$
and applying this result first to 
$$E=E(t),\quad u=\phi_t, \quad g|_{\partial B_{r_i}(z_i)}= 
\frac{\dd r_i(t) }{\dd t},$$
then to
$$E=E(t),\quad u=\varphi_t, \quad g = 
\frac{\dd \phi_t }{\dd \tau}.$$
Note that in the first case \eqref{nec} holds thanks to the 
conservation of area \eqref{eq:incLi}
and to \eqref{eq:defRiri}. In the second case it holds because the tangential derivative 
of $\phi_t$ integrates up to zero.

Since estimating the $L^\infty$ norm of the second derivatives constitutes a borderline problem,
we estimate instead $\|D^2u\|_{0,\alpha}$ for some $\alpha\in (0,1)$.
This Section \ref{se:Schauder} is devoted to showing that the elliptic regularity constant $C=C(E)$
in $$\|D^2u\|_{0,\alpha}\leq C \|g\|_{1,\alpha}$$
does not blow up as long as the holes $B_{r_i}(z_i)$
defining the domain $E$ remain far from each other and do not shrink down to zero radius. 

  \subsection{Dependence on the geometry of Poincar\'e's constant}
  
Recall the definition of $C_P(E)$ in \eqref{def_C_P}.
\begin{thm}   \label {thm 2}
Let $n\in\mathbb{N}$ and $0<\delta<1$.
There exists a universal constant $C(\delta)$ 
such that
$$C_P(E)\leq C(\delta)r_0$$
for $E=B_{r_0}(z_0)\setminus\bigcup_{i=1}^{n}B_{r_i}(z_i)$,
whenever $z_0,...z_n\in\mathbb{R}^2$ and $d,r_0,...,r_n>0$
satisfy $\frac{d}{r_0}\geq \delta$ and \eqref{eq:d}.
\end{thm}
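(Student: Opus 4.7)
The plan is to proceed by a rescaling and compactness argument. First, the change of variables $\tilde y=(y-z_0)/r_0$ sends an admissible domain $E$ to $\tilde E= B_1(0)\setminus \bigcup_{i=1}^n \overline B_{r_i/r_0}((z_i-z_0)/r_0)$, which still fulfils hypothesis \eqref{eq:d} with $\tilde r_0=1$ and $\tilde d=d/r_0\geq \delta$. Since this change of variables scales the $L^2$ norm of $\phi$ by $r_0$ while leaving $\|D\phi\|_{L^2}$ invariant, one obtains $C_P(E)=r_0\,C_P(\tilde E)$. It therefore suffices to establish a uniform bound $C_P(\tilde E)\leq C(\delta)$ over the family of domains with $r_0=1$ and $d\geq \delta$. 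Moreover, a simple packing argument bounds the number of holes a priori: the disjoint disks $B_\delta(z_i)\subset B_{r_i}(z_i)$ (recall $r_i\geq d\geq \delta$) all fit inside $B_1(0)$, forcing $n\pi\delta^2\leq \pi$, i.e.\ $n\leq \delta^{-2}$.

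Suppose, toward a contradiction, that no such $C(\delta)$ exists. Then there is a sequence of admissible domains $E^{(k)}$ (with $r_0^{(k)}=1$, $d^{(k)}\geq \delta$) and test functions $\phi^{(k)}\in H^1(E^{(k)})$ with $\int \phi^{(k)}=0$, $\|D\phi^{(k)}\|_{L^2}=1$, and $\|\phi^{(k)}\|_{L^2}\to \infty$. Normalising $\psi^{(k)}:=\phi^{(k)}/\|\phi^{(k)}\|_{L^2}$ yields $\|\psi^{(k)}\|_{L^2}=1$, $\int \psi^{(k)}=0$, and $\|D\psi^{(k)}\|_{L^2}\to 0$. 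Since $n\leq \delta^{-2}$, $z_i^{(k)}\in \overline B_1(0)$ and $r_i^{(k)}\in [\delta,1]$, after extracting a subsequence we may assume $n$ is constant and $(z_i^{(k)},r_i^{(k)})\to (z_i^*,r_i^*)$ for every $i$. Passing to the limit in the closed inequalities of \eqref{eq:d}, the limit set $E^*:=B_1(0)\setminus \bigcup_{i=1}^n \overline B_{r_i^*}(z_i^*)$ is again an admissible domain.

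For large $k$ I build a diffeomorphism $\Phi^{(k)}\colon E^*\to E^{(k)}$ equal to the identity outside $\delta/4$-neighbourhoods of the circles $\partial B_{r_i^*}(z_i^*)$, and inside each such (disjoint, by \eqref{eq:d}) neighbourhood given in polar coordinates centred at $z_i^*$ by a smooth combination of a small translation by $z_i^{(k)}-z_i^*$ and a radial stretch sending $r_i^*$ to $r_i^{(k)}$. The convergence $(z_i^{(k)},r_i^{(k)})\to (z_i^*,r_i^*)$ ensures that $\|\Phi^{(k)}-\id\|_{C^1(E^*)}\to 0$, so in particular the Jacobians tend to $1$ uniformly. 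The pulled-back sequence $\tilde\psi^{(k)}:=\psi^{(k)}\circ \Phi^{(k)}\in H^1(E^*)$ therefore inherits $\|\tilde\psi^{(k)}\|_{L^2(E^*)}\to 1$, $\|D\tilde\psi^{(k)}\|_{L^2(E^*)}\to 0$, and $\bigl|\int_{E^*}\tilde\psi^{(k)}\bigr|\to 0$. The (finite) Poincaré inequality on the fixed smooth domain $E^*$ applied to $\tilde\psi^{(k)}-\overline{\tilde\psi^{(k)}}$ forces $\tilde\psi^{(k)}\to 0$ in $L^2(E^*)$, contradicting $\|\tilde\psi^{(k)}\|_{L^2(E^*)}\to 1$.

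The main technical obstacle is the uniform construction of the diffeomorphisms $\Phi^{(k)}$; once one observes that circles of curvature $\leq \delta^{-1}$ kept at mutual distance $\geq \delta$ admit disjoint tubular neighbourhoods of width comparable to $\delta$, a smooth radial interpolation does the job. An alternative route — possibly preferable if one wishes a self-contained argument — is to construct, directly from the bounded curvature and separation, uniform $H^1$-extension operators $H^1(E^{(k)})\to H^1(B_1(0))$ with operator norm depending only on $\delta$, and then apply Rellich compactness on the fixed ball $B_1(0)$.
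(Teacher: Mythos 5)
Your argument is correct, and it reaches the same contradiction as the paper but by a different compactness device. Both proofs begin identically: rescale to $r_0=1$, $z_0=0$ (noting $C_P(E)=r_0C_P(\tilde E)$), observe $n\leq\delta^{-2}$, set up a contradicting sequence, and extract convergent centers and radii so that a limit domain exists (the non-strict inequalities in \eqref{eq:d}, with $d$ replaced by $\delta$, survive the limit, exactly as you say). The divergence is in how the fixed limit domain is used. The paper extends each $\phi_j$ to the whole ball $B_1(0)$ by an $H^1$-extension whose constant depends only on $\delta$, takes a weak $H^1$ limit $\phi$ on $B_1(0)$, proves $D\phi\equiv 0$ on every compactly contained admissible subdomain (hence $\phi$ is constant on the limit domain $E$), and then passes to the limit in the constraints $\int_{E_j}\phi_j^2=1$, $\int_{E_j}\phi_j=0$ using strong $L^q$ convergence together with $\chi_{E_j}\to\chi_E$, which forces $\phi=0$ and gives the contradiction. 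You instead pull the normalized functions back to the fixed limit domain $E^*$ via near-identity diffeomorphisms $\Phi^{(k)}$ (the buffer $\delta$ in \eqref{eq:d} guarantees the disjoint tubular neighbourhoods you need, and the cutoff-plus-radial-stretch construction indeed gives $\|\Phi^{(k)}-\mathrm{id}\|_{C^1}\to 0$), and then a single application of the Poincar\'e--Wirtinger inequality on the fixed connected Lipschitz domain $E^*$ finishes the proof; the small Jacobian error in the mean-zero condition is controlled exactly as you indicate. What each approach buys: the paper's route needs uniform (in $\delta$) extension operators into the holes but avoids any domain deformation, and its limit-identification step (constancy of $\phi$, $\chi_{E_j}\to\chi_E$) is somewhat longer; your route avoids extension operators and the identification of the weak limit altogether, at the price of constructing the diffeomorphisms, which is standard given the uniform separation. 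Your closing remark about uniform extension operators plus Rellich on $B_1(0)$ is, in essence, the paper's proof.
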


\begin{xrem}
 Note that $n\leq \delta^{-2}$, because \eqref{eq:d} implies 
 $$\bigcup_{i=1}^{n}B_{r_i+r_0\delta}(z_i)\subset B_{r_0}(z_0),$$ 
 which yields $$n(r_0\delta)^2\leq\sum_{i=1}^{n}(r_i+r_0\delta)^2\leq r_0^2.$$
\end{xrem}

\begin{proof}
\emph{It is enough to consider the case when $r_0=1$ and $z_0=0$.} Indeed,
suppose there exists such a constant $C(\delta)$ for domains with outer radius equal to $1$. Now consider a general $E$ (with $r_0$ not necessarily equal to $1$). Let $\phi\in H^{1}(E)$ be such that $\Vert D\phi\Vert_{L^{2}(E)}=1$ and $\int_{E}\phi=0$. Set $\psi(\omega):=\phi(z_0+r_0\omega)$, $\omega\in \hat{E}$ where $\hat{E}=\frac{E-z_0}{r_0}$. We have that
$$\int_{\hat{E}}|D\psi|^2=r_0^2\int_{\hat{E}}|D\phi(z_0+r_0\omega)|^2d\omega=\int_{E}|D\phi|^2dy=1$$
Clearly, we also have that $\int_{\hat{E}}\psi=0$. Then, by assumption, $\Vert \psi\Vert_{L^{2}}\leq C(\delta)$ (note that if $E$ satisfies the conditions in the statement then also does $\hat{E}$).
Hence $$\int_{E}\phi^2(y)dy=\int_{\hat{E}}\phi^2(z_0+r_0\omega)r_0^2d\omega=r_0^2\Vert\psi\Vert_{L^2(\hat{E})}^{2}\leq C(\delta)^2r_0^2;$$ since $\phi$ is arbitrary, this yields $C_P(E)\leq C(\delta)r_0$.
\medskip

\emph{Proof in the case $r_0=1$, $z_0=0$:}
 looking for a contradiction, suppose there exist $0<\delta<1$, a sequence of domains $(E_j)_{j\in \mathbb{N}}$ with unit outer radius and a sequence $(\phi_j)_{j\in\mathbb{N}}$ such that for all $j$:
 \begin{enumerate}[\upshape (i)]
\item $\phi_j\in H^1(E_j)$.
\item $\Vert D\phi_j\Vert_{L^2(E_j)}<\frac{1}{j}$, $\Vert\phi_j\Vert_{L^2(E_j)}=1$.
\item $\int_{E_j}\phi_j=0$.
\item Each $E_j$ satisfies the conditions in the statement of the theorem.
\end{enumerate}

Let $\tilde{\phi_j}$ denote the extensions of $\phi_j$ to $B_1(0)$. We have that 
$$\Vert \tilde{\phi_j} \Vert_{L^2(E_j)}\leq 2 \Vert \phi_j \Vert_{L^2(E_j)}=2
\quad \text{and}\quad 
\int_{B_1(0)}|D\tilde{\phi_j}|^2\leq C\left(\delta^{-2}+\frac{1}{j}\right).$$
Taking a subsequence, we obtain that $\tilde{\phi_j}\overset{H^1}{\rightharpoonup} \phi$ for some $\phi\in H^1(B_1(0))$. Also, a subsequence can be taken such that the centers $z_i^{(j)}$ and the radii $r_{i}^{(j)}$ of the holes of $E_j$ converge. Set $E$ be the limit domain. Clearly $|E\Delta E_j|\rightarrow 0$.

For every $E'= B_1(0)\setminus \bigcup_{i=1}^{i=n}B_{r_i'}(z_i)$ such that $E'\subset\subset E$ and such that the disks $\overline{B_{r_i'+\delta/2}(z_i)}$ are disjoint and contained in $B_{1-\delta/2}(0)$, we have that $D\tilde{\phi_j}=D\phi_j\rightarrow 0$ in $L^2(E')$ (because $\Vert D\phi_j \Vert_{L^2(E')} \leq \Vert D\phi_j \Vert_{L^2(E_j)} <\frac{1}{j}$ since $E'\subset E_j$ for sufficiently large $j$).
By uniqueness of weak limits, $D\phi\equiv 0$ in every such $E'$. Indeeed, for every $\eta\in C_{c}^{\infty}(E')$ we have that
$$\left|\int_{E'}\eta\partial_{\alpha}\phi\right|= \left|\int_{B_1(0)}\eta\partial_{\alpha}\phi\right|=\left|\lim_{j\rightarrow \infty}\int_{B_1(0)}\eta\partial_{\alpha}\tilde{\phi_j}\right|=\lim_{j\rightarrow \infty}\left|\int_{B_1(0)}\eta\partial_{\alpha}\tilde{\phi_j}\right|$$
$$=\lim_{j\rightarrow \infty}\left|\int_{E'}\eta\partial_{\alpha}\tilde{\phi_j}\right|\leq \limsup_{j\rightarrow \infty}\Vert \eta \Vert_{L^2(E')} \Vert D\tilde{\phi_j} \Vert_{L^2(E')}=0.$$
By the fundamental theorem of the calculus of variations, $\partial_{\alpha}\phi=0$ in $E'$.
It follows that $\phi|_{E'}$ is constant for every such $E'$. If $E',E''$ are two such domains and $E'\subset E''$, clearly the constant value of $\phi|_{E'}$ must coincide with the constant value of $\phi|_{E''}$, hence $\phi$ is constant in $E$.

Since $H^1(B_1(0))\subset \subset L^q(B_1(0))$ we can assume that for some $q>2$ $\tilde{\phi_j}\rightarrow\phi$ strongly in $L^q$. Thus
$$1=\lim_{j\rightarrow \infty}\int_{E_j}\phi_j^2=\lim_{j\rightarrow\infty}\int_{B_1(0)}\tilde{\phi_j}^2\chi_{E_j}=\int_{B_1(0)}\phi^2\chi_{E}$$
($ \tilde{\phi_j}^2\rightarrow \phi^2$ in $L^{\frac{q}{2}}$ and $\chi_{E_j}\rightarrow \chi_{E}$ in $L^{\left(\frac{q}{2}\right)'}$).
Analogously, 
$$0=\lim_{j\rightarrow \infty}\int_{E_j}\phi_j=\lim_{j\rightarrow\infty}\int_{B_1(0)}\tilde{\phi_j}\chi_{E_j}=\int_{B_1(0)}\phi\chi_{E}$$
Hence $\phi=0$ in $E$ (because $\phi$ was constant), but this contradicts that $\int_{B_1(0)}\phi^2\chi_{E}=1$. This completes the proof.
\end{proof}

\subsection{Dependence of the geometry of the trace constants}

\begin{lem} \label{trace}
Let $\phi\in H^{1}(B_{\rho_2}\setminus \overline{B_{\rho_1}})$ for some $0<\rho_1<\rho_2$. Then (for $i=1,2$):
$$\int_{\partial B_{\rho_i}}\phi^2(x)dS(x)\leq   
\frac{8}{\rho_2-\rho_1}\int_{B_{\rho_2}\setminus \overline{B_{\rho_1}}}\phi^2(x)dx+4(\rho_2-\rho_1)\int_{B_{\rho_2}\setminus \overline{B_{\rho_1}}}|D\phi|^2(x)dx  $$
\end{lem}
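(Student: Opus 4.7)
The plan is to work in polar coordinates $(r,\theta)$ centered at the common center of $B_{\rho_1}$ and $B_{\rho_2}$, and to exploit the fundamental theorem of calculus identity
\[
\phi^2(\rho_i,\theta)=\phi^2(r,\theta)+2\int_r^{\rho_i}\phi(s,\theta)\,\phi_s(s,\theta)\,ds,\qquad r\in(\rho_1,\rho_2),
\]
valid for smooth $\phi$ (the general case following by density of $C^1(\overline{B_{\rho_2}\setminus B_{\rho_1}})$ in $H^1$). The strategy is to average this identity over $r\in(\rho_1,\rho_2)$, apply Young's inequality $2|\phi\phi_s|\leq \phi^2/(\rho_2-\rho_1)+(\rho_2-\rho_1)\phi_s^2$ to the cross term, multiply by $\rho_i$, integrate over $\theta$ so that the left-hand side becomes $\int_{\partial B_{\rho_i}}\phi^2\,dS$, and finally convert the one-dimensional integrals on the right to Cartesian integrals over the annulus via the Jacobian $r\,dr\,d\theta$, using $\phi_s^2\leq|D\phi|^2$ throughout.

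The main subtlety, and the place where some care is needed, is that the passage from polar to Cartesian measure requires compensating a factor $1/r$, and one must avoid introducing the (unbounded) ratio $\rho_2/\rho_1$. For the inner boundary this is automatic: the prefactor $\rho_1$ from $dS=\rho_1\,d\theta$ satisfies $\rho_1\leq r$ throughout the annulus, so $\rho_1\int_{\rho_1}^{\rho_2}f\,dr\leq \int_{\rho_1}^{\rho_2}f\,r\,dr$ for $f\geq 0$. Averaging the identity with uniform weight over $r$, exchanging the order of integration in the double integral to rewrite the cross term as $\int_{\rho_1}^{\rho_2}(\rho_2-s)\phi\phi_s\,ds$, and then applying Young, produces the stated inequality on $\partial B_{\rho_1}$ with constants $2$ and $1$ in place of $8$ and $4$.

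For the outer boundary the naive uniform averaging leaves an unwanted factor $\rho_2/r$ at the conversion-to-Cartesian step, so the plan is instead to multiply the FTC identity by $r$ before integrating over $r\in(\rho_1,\rho_2)$. The left-hand side becomes $\phi^2(\rho_2,\theta)\cdot(\rho_2^2-\rho_1^2)/2$, and after exchanging the order of integration the cross term reads $\int_{\rho_1}^{\rho_2}\phi\phi_s(s^2-\rho_1^2)\,ds$. Using $s^2-\rho_1^2\leq 2s(\rho_2-\rho_1)$ together with Young's inequality $2(s-\rho_1)|\phi\phi_s|\leq \phi^2+(\rho_2-\rho_1)^2\phi_s^2$ then produces precisely the Jacobian-weighted integrals $\int\phi^2\,r\,dr$ and $\int\phi_r^2\,r\,dr$; multiplying by $\rho_2$ and dividing by $(\rho_2^2-\rho_1^2)/2\geq(\rho_2-\rho_1)\rho_2/2$ yields the estimate on $\partial B_{\rho_2}$ with constants $4$ and $2$.

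Both cases thus fall within the stated constants $8$ and $4$, and the only real obstacle, uniformity as $\rho_1/\rho_2\to 0$, is handled by the $r$-weighted averaging for $i=2$.
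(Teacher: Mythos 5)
Your proof is correct, and it takes a genuinely different route from the paper's. The paper's argument introduces a cutoff function $\eta$ with $|D\eta|\le\frac{1+\varepsilon}{\rho_2-\rho_1}$ that vanishes on the opposite boundary component, writes $\phi^2$ on $\partial B_{\rho_i}$ as the square of $\int_{\rho_1}^{\rho_2}\frac{d}{ds}\big((\eta\phi)(sz)\big)\,ds$, and applies Cauchy--Schwarz; the dangerous ratio $\rho_2/\rho_1$ is then avoided on the outer boundary by a case distinction (for $\rho_1\ge\rho_2/2$ the prefactor is absorbed via $\rho_2\le 2s$, while for $\rho_1<\rho_2/2$ the argument is rerun on the sub-annulus $B_{\rho_2}\setminus\overline{B_{\rho_2/2}}$), which is exactly what produces the constants $8$ and $4$. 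You instead expand $\phi^2$ on the boundary by the fundamental theorem of calculus, average in $r$ (uniformly for $i=1$, with the weight $r\,dr$ for $i=2$), and use Young's inequality; the $r$-weighted average makes the Jacobian appear automatically, so you need neither the cutoff nor the case split, and your constants $(2,1)$ for the inner and $(4,2)$ for the outer boundary are sharper than the stated $(8,4)$, so the lemma follows a fortiori. The key steps check out: exchanging the order of integration gives cross terms controlled by $\frac{2}{\rho_2-\rho_1}\int(\rho_2-s)|\phi\phi_s|\,ds$ and $\int(s^2-\rho_1^2)|\phi\phi_s|\,ds$, and the elementary bounds $\rho_2-s\le\rho_2-\rho_1$, $s^2-\rho_1^2\le 2s(\rho_2-\rho_1)$, $s+\rho_1\le 2s$, $\rho_2^2-\rho_1^2\ge\rho_2(\rho_2-\rho_1)$, together with $\rho_1\le r$ for the inner case, are all valid. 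The only point worth spelling out is the density step: to pass to the limit in the boundary integrals you should either invoke continuity of the trace operator on the Lipschitz annulus or, more self-containedly, apply your inequality to differences $\phi_n-\phi_m$ of the smooth approximants to see that their traces are Cauchy in $L^2(\partial B_{\rho_i})$; either way this is routine and does not affect the argument.
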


\begin{proof} 
 i) First we estimate $\int_{\partial B_{\rho_1}} \phi^2 dS$.
Given $\varepsilon >0$, let $\eta\in C^{\infty}(\overline{B_{\rho_2}}\setminus B_{\rho_1})$ be such that $\eta= 0$ on $\partial B_{\rho_2}$, $\eta=1$ on $\partial B_{\rho_1}$ and $|D\eta|\leq \frac{1+\varepsilon}{\rho_2-\rho_1}$.
$$    \int_{\partial B_{\rho_1}}\phi^2(x)dS(x)=\rho_1\int_{S^{1}}\left(\int_{\rho_1}^{\rho_2}\frac{d}{ds}((\eta \phi)(sz))ds\right)^2dS(z)     $$
$$\leq 2\rho_1(\rho_2-\rho_1)\int_{S^1}\int_{\rho_1}^{\rho_2}(|\phi D \eta|^2+|\eta D\phi|^2)dsdS(z)    $$
$$\leq 2\left(     \frac{(1+\varepsilon)^2}{\rho_2-\rho_1}\int_{\rho_1}^{\rho_2}\int_{S^{1}}\phi^2(sz)sdS(z)ds 
+(\rho_2-\rho_1)\int_{\rho_1}^{\rho_2}\int_{S^{1}}|D\phi|^2(sz)sdS(z)ds     \right).$$

\noindent ii) To estimate $\int_{\partial B_{\rho_2}} \phi^2 dS$, 
we consider first the case in which $\rho_1\geq\frac{\rho_2}{2}$: 
given $\varepsilon >0$, let $\eta\in C^{\infty}(\overline{B_{\rho_2}}\setminus B_{\rho_1})$ be such that $\eta= 1$ on $\partial B_{\rho_2}$, $\eta=0$ on $\partial B_{\rho_1}$ and $|D\eta|\leq \frac{1+\varepsilon}{\rho_2-\rho_1}$.
$$    \int_{\partial B_{\rho_2}}\phi^2(x)dS(x)=\rho_2\int_{S^{1}}\left(\int_{\rho_1}^{\rho_2}\frac{d}{ds}((\eta \phi)(sz))ds\right)^2dS(z)     $$
$$\leq 2\rho_2(\rho_2-\rho_1)\int_{S^1}\int_{\rho_1}^{\rho_2}(|\phi D \eta|^2+|\eta D\phi|^2)dsdS(z)    $$
$$\leq 4\left(     \frac{(1+\varepsilon)^2}{\rho_2-\rho_1}\int_{\rho_1}^{\rho_2}\int_{S^{1}}\phi^2(x)sdS(x)ds +(\rho_2-\rho_1)\int_{\rho_1}^{\rho_2}\int_{S^{1}}|D\phi|^2(x)sdS(x)ds     \right).$$

 Case in which $\rho_1<\frac{\rho_2}{2}$: by the previously considered case, 
since $H^{1}(B_{\rho_2}\setminus\overline{B_{\rho_1}})\subset H^{1}(B_{\rho_2}\setminus\overline{B_{\frac{\rho_2}{2}}})$ 
we have that
$$\int_{\partial \rho_2}\phi^2dx
\leq \frac{4}{\rho_2-\frac{\rho_2}{2}}\int_{B_{\rho_2}\setminus B_{\frac{\rho_2}{2}}}\phi^2 dx
+4\left(\rho_2-\frac{\rho_2}{2}\right)\int_{ B_{\rho_2} \setminus B_{\frac{\rho_2}{2}} } |D\phi|^2dx$$
$$\leq \frac{8}{\rho_2-\rho_1}\int_{B_{\rho_2}\setminus \overline{B_{\rho_1}}}\phi^2(x)dx
  +4(\rho_2-\rho_1)\int_{B_{\rho_2}\setminus \overline{B_{\rho_1}}}|D\phi|^2(x)dx. $$
\end{proof}

\subsection{Estimates in the interior of the domain}
\label{se:interior}

In the next subsection we shall focus on studying the regularity  near the boundary of $E$
of the solution to the Neumann problem \eqref{eq:defNeumann}. The results will be of the form:
`the $L^\infty$ and H\"older norms of $u$ and its derivatives in the annulus $r_i\leq |x-z_i|
< r_i + \frac{d}{3}$
are controlled by $u$ and its derivatives in the annulus $r_i+\frac{d}{3} < |x-z_i|< r_i + \frac{2d}{3}$'.
In this subsection we obtain estimates in this second annulus that lies a distance $\frac{d}{3}$ apart
from the boundary of $E$.
Since the analysis will be carried out separately around each hole $B_{r_i}(z_i)$, the localization
being possible by the multiplication with suitable cut-off functions,
we work in a generic annulus 
\begin{align}
 \label{eq:Omega}
 \Omega:=\{x\in \mathbb{R}^2: R<|x|<R+d\}.
\end{align}
For calculations that have to be made away from $\partial \Omega$,
we work in 
\begin{align} \label{eq:Omega'}
 \Omega':=\{x\in \mathbb{R}^2: R+\frac{1}{3}d<|x|<R+\frac{2}{3}d\}.
\end{align}
The generic radius $R$ in \eqref{eq:Omega} corresponds  
ultimately to the radius $r_i(t)$ of one of the holes of $E(t)$, for a fixed given $t$.
Recall that the role of the length $d$ is that of giving a uniform-in-time lower bound
for the width of an annular neighbourhood
of the excised hole that is still contained in $E(t)$,
as well as the lower bound $R\geq d$ for the radii (see \eqref{eq:d}).

The following regularity estimates for harmonic functions can be found in \cite[Thm.\ 2.2.7]{Evans10}
\begin{lem}   \label{harm reg}
Let $v$ be harmonic in $B_d(x)$, then:\\
$ \left\|v\right\|_{L^{\infty}(B_{d/2}(x))}\leq C d^{-2}\left\|v\right\|_{L^1(B_d(x))} .$\\
$ \left\|D^{\beta}v\right\|_{L^{\infty}(B_{d/2}(x))}\leq C d^{-2-|\beta|}\left\|v\right\|_{L^1(B_d(x))} .$
\end{lem}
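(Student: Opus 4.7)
The plan is to derive both estimates from the mean value property for harmonic functions, which gives $v(y)=\frac{1}{|B_r(y)|}\int_{B_r(y)} v\,\mathrm{d}z$ whenever $B_r(y)\subset B_d(x)$. For the first inequality, fix $y\in B_{d/2}(x)$. Since $B_{d/2}(y)\subset B_d(x)$, the mean value identity yields
\[
|v(y)| \leq \frac{1}{\pi(d/2)^2}\int_{B_{d/2}(y)}|v|\,\mathrm{d}z \leq \frac{4}{\pi d^2}\,\|v\|_{L^1(B_d(x))},
\]
which is the claim with $C=4/\pi$.

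For the derivative bound, I would argue by induction on $|\beta|$, relying on the fact that every partial derivative of a harmonic function is again harmonic. For the base case $|\beta|=1$, take $y\in B_{d/2}(x)$ and observe that $v_{,i}$ is harmonic on $B_{d/4}(y)\subset B_{3d/4}(x)$; applying the mean value property to $v_{,i}$ and then the divergence theorem gives
\[
|v_{,i}(y)| = \left|\frac{1}{|B_{d/4}(y)|}\int_{\partial B_{d/4}(y)} v\,\nu_i\,\mathrm{d}S\right| \leq \frac{C}{d}\,\|v\|_{L^\infty(\partial B_{d/4}(y))}.
\]
Since $\partial B_{d/4}(y)\subset B_{3d/4}(x)$ and each point on that sphere sits at the center of a ball of radius $d/4$ contained in $B_d(x)$, the already-proved $L^\infty$ bound (applied to a ball of radius $d/4$ rather than $d$) controls $\|v\|_{L^\infty(\partial B_{d/4}(y))}$ by $C d^{-2}\|v\|_{L^1(B_d(x))}$, producing the estimate with $d^{-2-1}$. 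For the inductive step, with $|\beta|=k+1$ I would write $D^\beta v = \partial_i D^{\beta'}v$ for some $|\beta'|=k$ and repeat the argument: $D^{\beta'}v$ is harmonic, so on a ball $B_{d/2^{k+1}}(y)$ the divergence theorem and the inductive hypothesis (applied on the slightly larger ball to bound $D^{\beta'}v$ in $L^\infty$) yield an extra factor $d^{-1}$.

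Since this is a classical interior estimate (cf.\ \cite[Thm.\ 2.2.7]{Evans10}), there is no real obstacle; the only point requiring a little care is that at each differentiation step the ball on which one applies the mean value theorem must shrink by a geometric factor so as to still fit inside $B_d(x)$. The constants $C$ in the final statement therefore depend on $|\beta|$, which is permissible as the estimates are used only for $|\beta|\leq 2$ in the subsequent Schauder analysis.
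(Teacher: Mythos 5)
Your argument is correct and is essentially the same mean-value-property/induction proof as in the reference the paper cites for this lemma (\cite[Thm.\ 2.2.7]{Evans10}); the paper itself gives no proof beyond that citation. The shrinking-ball bookkeeping and the resulting dependence of $C$ on $|\beta|$ are exactly as in the classical argument and are harmless for the way the lemma is used later.
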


\begin{proposition} \label{prop2}
For every positive $d$ and $r_{max}$, with $d<r_{max}$, there
exists a constant $C(d, r_{max})$ such that 
if $d\leq R\leq r_{max}$ and $v$ is harmonic in $\Omega$ then
$$ \left\|v\right\|_{L^{\infty}(\Omega')}
+ [v  ]_{0,\alpha(\Omega')}
+ \left\|Dv\right\|_{L^{\infty}(\Omega')}
+ [ v ]_{1,\alpha(\Omega')}
\leq C(d,r_{max})\left\|v\right\|_{L^1(\Omega)}. $$
\end {proposition}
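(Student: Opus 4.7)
The plan is to apply Lemma \ref{harm reg} locally, on balls of radius comparable to $d$ centred at points in a slightly enlarged version of $\Omega'$, to obtain pointwise $L^\infty$ bounds on $v$ together with its first and second derivatives, and then to convert these into H\"older seminorm estimates via a short-distance/long-distance dichotomy.

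First I would introduce the intermediate annulus $\Omega'':=\{x : R+d/4<|x|<R+3d/4\}$, so that $\Omega'\subset\Omega''$ and for every $x\in\Omega''$ the ball $B_{d/5}(x)$ is contained in $\Omega$, where $v$ is harmonic. Applying the scaled form of Lemma \ref{harm reg} (which holds on balls of any radius by dilation) on each such $B_{d/5}(x)$ yields, for every multi-index $\beta$ with $|\beta|\leq 2$,
$$\|D^\beta v\|_{L^\infty(\Omega'')}\leq C\,d^{-2-|\beta|}\,\|v\|_{L^1(\Omega)},$$
with $C$ a universal constant. This immediately supplies the required bounds on $\|v\|_{L^\infty(\Omega')}$ and $\|Dv\|_{L^\infty(\Omega')}$.

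For the H\"older seminorms I would split in cases. If $x,y\in\Omega'$ satisfy $|x-y|\leq d/12$, the triangle inequality forces the segment $[x,y]$ to lie in $\Omega''$, so the mean value theorem combined with the previous bounds gives
$$|v(x)-v(y)|\leq C\,d^{-3}\|v\|_{L^1(\Omega)}\,|x-y|, \qquad |Dv(x)-Dv(y)|\leq C\,d^{-4}\|v\|_{L^1(\Omega)}\,|x-y|;$$
dividing by $|x-y|^\alpha$ only introduces an extra factor $|x-y|^{1-\alpha}\leq (d/12)^{1-\alpha}$, which is absorbed into the constant. If instead $|x-y|>d/12$, the $L^\infty$ bounds from the previous step already imply
$$\frac{|v(x)-v(y)|}{|x-y|^\alpha}\leq \frac{2\|v\|_{L^\infty(\Omega')}}{(d/12)^\alpha}\leq C(d)\,\|v\|_{L^1(\Omega)},$$
and analogously for $Dv$. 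Combining the two ranges yields the desired estimates for $[v]_{0,\alpha(\Omega')}$ and $[v]_{1,\alpha(\Omega')}=[Dv]_{0,\alpha(\Omega')}$.

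No serious obstacle is anticipated: this is a textbook instance of interior regularity for harmonic functions on a buffered subdomain. The only mildly subtle point is the non-convexity of $\Omega'$, which the dichotomy above circumvents (equivalently, an annulus is quasi-convex with constants depending only on $d$ and $r_{max}$). In fact $r_{max}$ need not enter the final constant, since $\|v\|_{L^1(B_{d/5}(x))}\leq \|v\|_{L^1(\Omega)}$ independently of $R$, but the statement as given is of course consistent with the looser dependence $C(d,r_{max})$.
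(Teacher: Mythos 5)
Your proof is correct. The first step is exactly the paper's: interior estimates for harmonic functions (Lemma \ref{harm reg}, suitably scaled to balls of radius comparable to $d$ that fit inside $\Omega$) give $\|D^\beta v\|_{L^\infty}\leq C d^{-2-|\beta|}\|v\|_{L^1(\Omega)}$ on a buffered annulus, which already yields the $L^\infty$ bounds. Where you diverge is in passing from these sup bounds to the H\"older seminorms: the paper connects two arbitrary points of $\Omega'$ by a polar path (an arc of a circle plus a radial segment), estimating each piece by integrating the gradient, and the arc--chord comparison $r|\theta_1-\theta_2|\leq \tfrac{\pi}{2}|re^{i\theta_1}-re^{i\theta_2}|$ together with $|re^{i\theta_1}-re^{i\theta_2}|\leq 2r\leq CR$ is what produces the factor $R^{1-\alpha}$ and hence the dependence of the constant on $r_{max}$. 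You instead use a short-distance/long-distance dichotomy: for $|x-y|\leq d/12$ the straight segment stays in the intermediate annulus $\Omega''$ (your verification of this via the triangle inequality is sound, since $d/3-d/4=d/12$) and the mean value theorem applies, while for $|x-y|>d/12$ the $L^\infty$ bounds suffice after dividing by $(d/12)^\alpha$. Both arguments are valid; yours circumvents the non-convexity of the annulus without polar coordinates and, as you observe, yields a constant depending on $d$ alone, which is slightly sharper than, and consistent with, the stated $C(d,r_{max})$.
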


\begin{proof}
 The estimates for the $L^\infty$ norm of $v$ and $Dv$ follow from the previous lemma.
To prove the estimate for $[v  ]_{0,\alpha}$ 
note that using polar coordinates we get (for $r\in (R+\frac{1}{3}d,R+\frac{2}{3}d)$ and $\theta_1,\theta_2\in [-\pi,\pi]$, such that $|\theta_1-\theta_2|\leq \pi$):
$$ |v(re^{i\theta_1})-v(re^{i\theta_2})|\leq \int_{\theta_1}^{\theta_2}\left|\frac{d}{d\theta}\left(v(re^{i\theta})\right)\right|d\theta \leq \int_{\theta_1}^{\theta_2}\left|\frac{\partial v}{\partial x_1}\right|r|\sin(\theta)|+\left|\frac{\partial v}{\partial x_2}\right|r|\cos(\theta)|d\theta$$
$$\leq Cd^{-3}\left\|v\right\|_{L^{1}(\Omega)}r|\theta_1-\theta_2|\leq Cd^{-3}\left\|v\right\|_{L^{1}(\Omega)}|re^{i\theta_1}-re^{i\theta_2}|^{\alpha}R^{1-\alpha},$$
 since $r|\theta_1-\theta_2|\leq \frac{\pi}{2}|re^{i\theta_1}-re^{i\theta_2}|$ (recall that $\frac{2}{\pi^2}\leq\frac{1-\cos(\theta)}{\theta^2} \leq\frac{1}{2}$, for $\theta\in [-\pi,\pi] $) and $|re^{i\theta_1}-re^{i\theta_2}|\leq 2r\leq CR$ .\\
Moreover, for $\theta\in [-\pi,\pi]$ and $r_1,r_2\in[R+\frac{1}{3}d,R+\frac{2}{3}d]$, we have:
$$|v(r_1e^{i\theta})-v(r_2e^{i\theta})|\leq \int_{r_1}^{r_2}\left|\frac{d}{dr}\left(v(re^{i\theta})\right)\right|dr\leq \int_{r_1}^{r_2}\left|\frac{\partial v}{\partial x_1}\right||\cos(\theta)|+\left|\frac{\partial v}{\partial x_2}\right||\sin(\theta)|dr$$
$$\leq Cd^{-3}\left\|v\right\|_{L^{1}(\Omega)}|r_1-r_2|\leq Cd^{-3}\left\|v\right\|_{L^{1}(\Omega)}|r_1e^{i\theta}-r_2e^{i\theta}|^{\alpha}R^{1-\alpha}$$

Now, for $r_1,r_2\in[R+\frac{1}{3}d,R+\frac{2}{3}d]$, $r_1\leq r_2$ and $\theta_1,\theta_2\in [-\pi,\pi]$, such that $|\theta_1-\theta_2|\leq \pi$, we have:
$$ |v(r_1e^{i\theta_1})-v(r_2e^{i\theta_2})|\leq |v(r_1e^{i\theta_1})-v(r_1e^{i\theta_2})|+|v(r_1e^{i\theta_2})-v(r_2e^{i\theta_2})|$$
$$\leq Cd^{-3}R^{1-\alpha}\left\|v\right\|_{L^{1}(\Omega)}(|r_1e^{i\theta_1}-r_1e^{i\theta_2}|^{\alpha}+|r_1e^{i\theta_2}-r_2e^{i\theta_2}|^{\alpha})$$
$$\leq Cd^{-3}R^{1-\alpha}\left\|v\right\|_{L^{1}(\Omega)}(|r_1e^{i\theta_1}-r_2e^{i\theta_2}|^{\alpha}+|r_1e^{i\theta_1}-r_2e^{i\theta_2}|^{\alpha}),$$
 since $|r_1e^{i\theta_1}-r_2e^{i\theta_2}|^{2}=(r_1-r_2)^2+2r_1r_2(1-\cos(\theta_1-\theta_2))\geq 2r_1^{2}(1-\cos(\theta_1-\theta_2))=|r_1e^{i\theta_1}-r_1e^{i\theta_2}|^2$ and $|r_1e^{i\theta_1}-r_2e^{i\theta_2}|\geq|r_1-r_2|$.

 The proof of the estimate for $[Dv]_{0,\alpha}$ is analogous.
\end{proof}

\begin{lem} \label{lemma1}
For every positive $d$ and $r_{max}$, with $d<r_{max}$, there
exists a constant $C(d, r_{max})$ such that 
if $d\leq R\leq r_{max}$,
$v$ is harmonic in $\Omega$, and $\zeta$ is
a cut-off function with support within $|x|<R+\frac{2}{3}d$ and equal to $1$ for $|x|\leq R+\frac{1}{3}d$, then
$$ \left\|\Delta(v\zeta)\right\|_{\infty (\mathbb{R}^2)} + [\Delta(v\zeta)]_{0,\alpha(\mathbb{R}^2)}
\leq C(d, r_{\max}) \left\|v\right\|_{L^1(\Omega)}.$$
\end {lem}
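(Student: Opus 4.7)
The plan is to reduce the proof to a direct application of Proposition \ref{prop2}, which already controls $v$ and $\nabla v$ in $C^{0,\alpha}(\Omega')$ by $\|v\|_{L^1(\Omega)}$. First I would use the harmonicity of $v$ in $\Omega$ and Leibniz's rule to write
\[
\Delta(v\zeta) \;=\; 2\,\nabla v \cdot \nabla \zeta \;+\; v\,\Delta \zeta.
\]
Because $\zeta$ is constantly equal to $1$ on $\{|x|\leq R+d/3\}$ and constantly equal to $0$ on $\{|x|\geq R+2d/3\}$, both $\nabla \zeta$ and $\Delta \zeta$ are supported in $\overline{\Omega'}$ and, by the smoothness of $\zeta$, also vanish on $\partial\Omega'$. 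Consequently $\Delta(v\zeta)$ is supported in $\overline{\Omega'}$ and vanishes on its boundary, so after extension by zero it is continuous on all of $\mathbb{R}^2$.

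Next I would fix the cutoff concretely as a dilation $\zeta(x):=\eta((|x|-R)/d)$ of a standard profile $\eta\in C^\infty(\mathbb{R})$, chosen once and for all (and extended so that $\zeta\equiv 1$ on $\{|x|\leq R\}$ and $\zeta\in C^\infty(\mathbb{R}^2)$), independently of $R$, $d$, and $v$. A routine computation then gives the absolute bounds
\[
\|\nabla \zeta\|_\infty \leq C/d,\quad \|\Delta \zeta\|_\infty \leq C/d^2,\quad [\nabla \zeta]_{0,\alpha} \leq C/d^{1+\alpha},\quad [\Delta \zeta]_{0,\alpha} \leq C/d^{2+\alpha}.
\]
Combining these with Proposition \ref{prop2} and the elementary product rules $\|fg\|_\infty \leq \|f\|_\infty\|g\|_\infty$ and $[fg]_{0,\alpha} \leq \|f\|_\infty[g]_{0,\alpha} + \|g\|_\infty[f]_{0,\alpha}$ applied on $\Omega'$ yields the desired bound for $\|\Delta(v\zeta)\|_{L^\infty(\overline{\Omega'})} + [\Delta(v\zeta)]_{0,\alpha(\overline{\Omega'})}$, with a constant depending only on $d$, $r_{max}$, and $\alpha$ (through Proposition \ref{prop2}).

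It remains to promote the estimate from $\overline{\Omega'}$ to all of $\mathbb{R}^2$. The $L^\infty$ norms on $\mathbb{R}^2$ and on $\overline{\Omega'}$ coincide, since $\Delta(v\zeta)$ vanishes outside. For the Hölder seminorm, given any $x\in\overline{\Omega'}$ and $y\in \mathbb{R}^2 \setminus \overline{\Omega'}$, the segment $[x,y]$ meets $\partial\Omega'$ at some point $y_0$ where $\Delta(v\zeta)$ vanishes; hence
\[
|\Delta(v\zeta)(x) - \Delta(v\zeta)(y)| \;=\; |\Delta(v\zeta)(x) - \Delta(v\zeta)(y_0)| \;\leq\; [\Delta(v\zeta)]_{0,\alpha(\overline{\Omega'})}\, |x-y_0|^\alpha \;\leq\; [\Delta(v\zeta)]_{0,\alpha(\overline{\Omega'})}\, |x-y|^\alpha,
\]
so $[\Delta(v\zeta)]_{0,\alpha(\mathbb{R}^2)} = [\Delta(v\zeta)]_{0,\alpha(\overline{\Omega'})}$. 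I do not anticipate a substantive obstacle here: the argument is essentially the Leibniz rule together with the already-established interior harmonic regularity, and every constant that appears is either absolute or of the form $C(d,r_{max})$ inherited from Proposition \ref{prop2}.
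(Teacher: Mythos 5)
Your proposal is correct and follows essentially the same route as the paper's proof: expand $\Delta(v\zeta)=2\nabla v\cdot\nabla\zeta+v\Delta\zeta$, choose a cutoff with derivative bounds scaling like powers of $d^{-1}$, control everything on $\Omega'$ via Proposition \ref{prop2} and the product rules for the H\"older seminorm, and pass from $\overline{\Omega'}$ to $\mathbb{R}^2$ by the segment-through-$\partial\Omega'$ argument using that $\Delta(v\zeta)$ vanishes there. The only difference is cosmetic (your explicit dilation cutoff and interpolation bounds for $[\nabla\zeta]_{0,\alpha}$ versus the paper's generic choice with $|D^k\zeta|\leq C_k d^{-k}$), so no further comment is needed.
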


\begin{proof}
 It is clear that we can choose $\zeta$ to be such that: $|D^{k}\zeta|\leq C_kd^{-k}$ (and then $[\zeta]_{k,\alpha(\Omega')}\leq C_{k+1}d^{-k-1}R^{1-\alpha} $
 since $\zeta\in C_{c}^{\infty}(B_{R+d}(0))$). Then, using Proposition \ref{prop2} and the estimates for $\zeta$ we get: 
\begin{align*}
|\Delta(v\zeta)|&\leq 2|\nabla v \cdot \nabla \zeta| +|v\Delta \zeta|
 \leq 
Cd^{-4}\left\|v\right\|_{L^1(\Omega)}.
\end{align*}
On the other hand:
$$ [\Delta(v\zeta)]_{0,\alpha(\Omega')}\leq 2[\nabla v \cdot \nabla \zeta]_{0,\alpha(\Omega')} +[v\Delta \zeta]_{0,\alpha(\Omega')}$$
and 
\begin{align*}
[v_{,\beta} \cdot \zeta_{,\beta}]_{0,\alpha(\Omega')}&
\leq  [v_{,\beta}]_{0,\alpha(\Omega')}\left\|\zeta_{,\beta}\right\|_{\infty(\Omega')}
+[\zeta_{,\beta}]_{0,\alpha(\Omega')}\left\|v_{,\beta}\right\|_{\infty(\Omega')}
\\
[v\Delta \zeta]_{0,\alpha(\Omega')}&\leq 
[v]_{0,\alpha(\Omega')}\left\|\Delta\zeta\right\|_{\infty(\Omega')}+
[\Delta\zeta]_{0,\alpha(\Omega')}\left\|v\right\|_{\infty(\Omega')}.
\end{align*}
Hence:
$$ [\Delta(v\zeta)]_{0,\alpha(\Omega')}\leq C(d, r_{max})\left\|v\right\|_{L^1(\Omega)}.$$
Now if $x\in \Omega'$ and $y\in \mathbb{R}^2\setminus \overline{\Omega'}$, there exists $t\in (0,1)$ such that $z=tx+(1-t)y\in \partial\Omega'$, then we have
$$ |\Delta(v\zeta)(x)-\Delta(v\zeta)(y)|\leq |\Delta(v\zeta)(x)-\Delta(v\zeta)(z)|+|\Delta(v\zeta)(z)-\Delta(v\zeta)(y)|$$
$$=|\Delta(v\zeta)(x)-\Delta(v\zeta)(z)|\leq C(d, r_{max})\left\|v\right\|_{L^1(\Omega)}|x-z|^{\alpha}$$
(clearly if $x,y\in \mathbb{R}^2\setminus \overline{\Omega'}$, $|\Delta(v(x)\zeta(x))-\Delta(v(y)\zeta(y))|=0$).
The result follows by observing that $|x-z|\leq |x-y|$.
\end{proof}
 
 \subsection{Estimates near circular boundaries}
\label{se:circles}

Recall the notation $G_N$, $\Phi$ and $\phi^x(y)$ of Section \ref{se:notation_Green}.
Let $R$, $d$, $\Omega$ and $\Omega'$ be as in \eqref{eq:Omega}-\eqref{eq:Omega'}.
The following representation formula for the solution 
to the Neumann problem
can be obtained using standard arguments
(as those in \cite{DiBenedetto09,Evans10};
a complete proof can be found in \cite[Prop.\ 5.2]{CH19singular}).

\begin{proposition} \label{prop1}
Let $v$ be harmonic in $\Omega$ and $\zeta$ be a cut-off function with support within $|x|<R+\frac{2}{3}d$ and equal to $1$ for $|x|\leq R+\frac{1}{3}d$. 
Then, if $u=\zeta v$:
$$u(x)=C-\int_{\partial B_R}\frac{\partial u}{\partial \nu}G_N(x,y)dS(y)
-\int_{\Omega}\Delta u\, G_N(x,y)dy,
 \qquad
 \nu(y):= \frac{y}{R}
 \ \text{for } y\in \partial B_R.$$
\end{proposition}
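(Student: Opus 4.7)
My plan is to apply Green's second identity to $u$ and $G_N(x,\cdot)$ on the perforated domain $\Omega_\eta := \Omega \setminus \overline{B_\eta(x)}$, and then let $\eta \downarrow 0$. The key observation is that, although the evaluation point $x$ sits in $\Omega$ (outside the disk $B_R(0)$), the object $G_N(x,\cdot)$ from Section~\ref{se:notation_Green} is precisely the Neumann Green's function associated with that disk, and the two properties that make the formula fall out still hold in this ``exterior'' configuration.

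Before invoking Green's identity I would establish those two properties. First, for $x\in\Omega$ the inversion point $x^{*}=R^2 x/|x|^2$ lies inside $B_R$, so $\log|y-x^{*}|$ is smooth on $\Omega$; combined with $\Delta_y\Phi(y-x)=-\delta_x$ and $\Delta_y |y|^2 =4$, this yields $\Delta_y G_N(x,y)= -\delta_x + \tfrac{1}{\pi R^2}$ distributionally on $\Omega$. Second, for $y\in\partial B_R$ the classical identity $|y-x^{*}|=(R/|x|)\,|y-x|$ holds for any $x\ne 0$ (regardless of which side of $\partial B_R$ it sits on); combined with $(y-x^{*})\cdot y = R^2 -(R^2/|x|^2)\,x\cdot y$ and the analogous $(y-x)\cdot y = R^2-x\cdot y$, an algebraic simplification using $R^2+|x|^2-2x\cdot y=|y-x|^2$ delivers the cancellation $\partial_{\nu_y} G_N(x,y)=0$ on $\partial B_R$, where $\nu_y=y/R$.

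With these in hand, Green's second identity on $\Omega_\eta$ gives, for $\nu^{*}$ the outer normal to $\Omega_\eta$,
\[
\int_{\Omega_\eta}\!\big(u\,\Delta_y G_N - G_N\,\Delta u\big)\,dy
 = \int_{\partial\Omega_\eta}\!\big(u\,\partial_{\nu^{*}} G_N - G_N\,\partial_{\nu^{*}} u\big)\,dS(y).
\]
The three pieces of $\partial\Omega_\eta$ are handled separately: on $\partial B_{R+d}$ both $u$ and $\partial_\nu u$ vanish because $\zeta$ is supported in $|y|<R+\tfrac{2}{3}d$; on $\partial B_R$ the outer normal is $\nu^{*}=-y/R=-\nu_y$, so $\partial_{\nu^{*}}G_N=0$ is preserved and the only surviving contribution is $+\int_{\partial B_R} G_N\,\partial_\nu u\,dS(y)$ (the two sign flips cancel); on $\partial B_\eta(x)$ the standard singularity computation yields $\int_{\partial B_\eta(x)} u\,\partial_{\nu^{*}}G_N\,dS \to u(x)$, while $\int_{\partial B_\eta(x)} G_N\,\partial_{\nu^{*}}u\,dS$ is $O(\eta|\log\eta|)$ and drops out. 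Using $\Delta_y G_N=\tfrac{1}{\pi R^2}$ on $\Omega_\eta$ and passing to the limit $\eta\downarrow 0$ delivers exactly the claimed formula with $C:=\tfrac{1}{\pi R^2}\int_\Omega u\,dy$. The only real obstacle is bookkeeping the sign conventions for the outer normals and checking that the property $\partial_{\nu_y}G_N=0$ on $\partial B_R$ persists when $x$ lies on the \emph{outer} side of $\partial B_R$ instead of the inner side (the inversion identity above makes this automatic).
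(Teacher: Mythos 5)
Your argument is correct and is exactly the standard Green's-representation proof that the paper itself defers to (it cites \cite[Prop.~5.2]{CH19singular} and the classical arguments of DiBenedetto/Evans rather than writing it out): excise $B_\eta(x)$, apply Green's second identity, and use that $\Delta_y G_N(x,\cdot)=\tfrac{1}{\pi R^2}$ away from $x$ while $\partial_{\nu_y}G_N(x,\cdot)=0$ on $\partial B_R$, a cancellation you correctly verify remains valid when $x$ lies outside $\overline{B}_R$ since then $x^*\in B_R$. Your bookkeeping of the normals (the inner-boundary sign flips cancelling, the vanishing of the $\partial B_{R+d}$ terms by the support of $\zeta$, and $C=\tfrac{1}{\pi R^2}\int_\Omega u\,dy$) matches the stated formula, so nothing is missing.
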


\begin{proposition}   \label{prop11}
Let $R$ be any positive number, $g$ be any function in $C^{1,\alpha}(\partial B_R(0))$,
and $$u(x):=\int_{\partial B_R(0)}g(y)G_N(x,y)dS(y).$$ Then,
in $B_{R+d}(0)\setminus  \overline{B}_R(0)$:
\begin{multline*}
  \left\|D u\right\|_{\infty}
                 +
                      [Du]_{0,\alpha}
                   + 
                   \left\|D^{2} u\right\|_{\infty}
                   +
                         [D^2 u]_{0,\alpha}
                         \\ \leq 
                         C(\min\{1,R\}^{-\alpha} + \max\{1, R\}^\alpha)(
                         \|g\|_\infty + [g]_{0,\alpha} + \|g'\|_\infty + [g']_{0,\alpha}).
\end{multline*}
\end {proposition}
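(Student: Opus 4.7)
The plan is to make $u$ explicit via the Fourier expansion of $g$ on the circle, thereby identifying $u$ as a specific harmonic function in $\R^2\setminus\overline{B}_R$ whose trace on $\partial B_R$ is one degree smoother than $g$, and then to extract the estimate from a boundary Schauder argument together with a scaling reduction to $R=1$ that keeps track of the $R$-dependence.

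First I would decompose $G_N(x,y)=-\frac{1}{2\pi}\log|y-x|-\frac{1}{2\pi}\log|y-x^*|+\frac{|y|^2}{4\pi R^2}$ on $\partial B_R$. The last term is constant in $x$ (since $|y|\equiv R$ there), hence drops out of every derivative. Both logarithmic kernels are harmonic in $x$ on the annulus $B_{R+d}\setminus\overline{B}_R$: the first trivially, and the second because the inversion $x\mapsto x^*=R^2x/|x|^2$ is well-defined and anti-holomorphic there, and anti-holomorphic precomposition preserves harmonicity in the plane. Hence $u$ itself is harmonic in that annulus.

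Next, writing $g(Re^{i\theta})=\sum_n\hat g_n e^{in\theta}$ and applying the identity $\log|1-(R/r)e^{i\psi}|=-\sum_{n\ge 1}\frac{1}{n}(R/r)^n\cos(n\psi)$ (valid for $r>R$), a direct calculation yields
\begin{equation*}
u(re^{i\phi})=-R\hat g_0\log r+R\sum_{n\ge 1}\frac{1}{n}\Bigl(\frac{R}{r}\Bigr)^n\bigl(\hat g_n e^{in\phi}+\hat g_{-n}e^{-in\phi}\bigr)+\mathrm{const}.
\end{equation*}
This representation has two crucial consequences. On the one hand, the trace $u|_{\partial B_R}$ is obtained from $g$ by convolution with the kernel $-2R\log|2\sin(\cdot/2)|$, a smoothing operator of order one; therefore $g\in C^{1,\alpha}$ produces a $C^{2,\alpha}$ trace with controlled norm. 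On the other hand, termwise radial differentiation gives $\partial_r u|_{r=R^+}=-g$, while an analogous identification shows that $r^{-1}\partial_\phi u$ equals (at the boundary) minus the conjugate function of $g$. Applying the exterior Poisson formula to the bounded harmonic function $u+R\hat g_0\log r$ and using the boundedness of the Poisson extension and of the Hilbert transform on $C^{0,\alpha}$ (Privalov's theorem) then yields, in the case $R=1$, the estimate $\|Du\|_\infty+[Du]_{0,\alpha}+\|D^2u\|_\infty+[D^2u]_{0,\alpha}\le C(\|g\|_{0,\alpha}+\|g'\|_{0,\alpha})$.

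Finally, the $R$-dependent prefactor $\min\{1,R\}^{-\alpha}+\max\{1,R\}^\alpha$ is recovered from the scaling $\tilde x=x/R$, $\tilde y=y/R$, $\tilde g(\tilde y):=g(R\tilde y)$, under which $u(R\tilde x)$ becomes, up to an additive constant, an integral of the same form but with $R=1$. The arc-length $C^{1,\alpha}$ seminorms of $\tilde g$ on the unit circle and those of $u$ on the rescaled annulus relate to the original quantities by powers of $R$; combining the two scalings produces precisely the prefactor claimed, with the $\min\{1,R\}^{-\alpha}$ term dominating for small $R$ and $\max\{1,R\}^\alpha$ for large $R$. The main technical difficulty is that termwise summation of $D^2u$ is borderline: for $g\in C^{1,\alpha}$ one has $\hat g_n=O(|n|^{-1-\alpha})$, so the coefficients of the second-derivative series decay only like $|n|^{-\alpha}$, which diverges as $r\to R^+$. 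This obstruction is circumvented not by brute-force summation but by either invoking Privalov's theorem for the Hilbert transform (as above) or appealing to boundary Schauder estimates for harmonic functions, applied to $u+R\hat g_0\log r$ using the a priori $C^{2,\alpha}$ regularity of its trace.
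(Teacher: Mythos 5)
Your argument is correct in substance but follows a genuinely different route from the paper's. The paper restricts $G_N(\cdot,y)$ to $y\in\partial B_R(0)$ and uses the reflection identity $\log|y-x^{*}|=\log|y^{*}-x|+\log|y|-\log|x|$ to collapse the kernel to $-\frac{1}{\pi}\log|x-y|$ plus the explicit terms $\frac{1}{2\pi}\log\frac{|x|}{R}$ and a constant; after rescaling to the unit circle, the heart of its proof is a Calder\'on--Zygmund estimate for the single-layer potential $\int_{\partial B_1(0)}\hat g(y)\log|\hat x-y|\,dS(y)$ with $C^{1,\alpha}$ density, quoted from the companion paper \cite{CH19singular}. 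You instead exploit the circular geometry to compute $u$ explicitly as a Fourier series in the exterior of $B_R(0)$ (your series, the identification $\partial_r u|_{r=R^+}=-g$, and the convolution kernel $-2R\log|2\sin(\cdot/2)|$ for the trace are all correct), then obtain the $C^{2,\alpha}$ regularity of the trace via Privalov's theorem for the conjugate function, and finish with boundary Schauder/Poisson-kernel estimates for the exterior Dirichlet problem applied to the bounded harmonic function $u+R\hat g_0\log r$. This replaces the singular-integral machinery by classical conjugate-function theory and is self-contained for circles, at the price of being tied to the explicit geometry; the paper's single-layer route is less structure-dependent and is what its companion paper actually proves. Your handling of the borderline issue (non-summability of the termwise-differentiated series at $r=R$) by passing through the trace rather than brute-force summation is exactly the right fix.

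The one place where you are too quick is the claim that the scaling $\tilde x=x/R$ ``produces precisely the prefactor claimed.'' Tracking powers of $R$, the terms $\|D^{2}u\|_{\infty}$ and $[D^{2}u]_{0,\alpha}$ pick up factors $R^{-1}$ and $R^{-1-\alpha}$, not $R^{-\alpha}$: already for $g\equiv 1$ one has $u=-R\log|x|+\mathrm{const}$, hence $\|D^{2}u\|_{\infty}\sim R^{-1}$ on $R<|x|<R+d$, which exceeds $C(\min\{1,R\}^{-\alpha}+\max\{1,R\}^{\alpha})$ for small $R$. So the literal prefactor cannot be recovered by your scaling (and this simple example indicates that the stated $R$-dependence is itself optimistic as $R\to 0$, an issue of the statement rather than of your method). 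What your argument does deliver, and what is all that is used downstream --- Theorem \ref{thm1} invokes the proposition only for $d\leq R\leq r_{max}$ --- is the estimate with a constant depending only on upper and lower bounds for $R$; if you state and prove that version, your proof goes through.
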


\begin{proof}
Using the identity $|x_1||x_1^{*}-x_2|=|x_2||x_1-x_2^{*}|$, let us first note that:
\begin{align} \label{log-reflection}
\log|y-x^{*}|=\log|y^{*}-x|+\log|y|-\log|x|.  \end{align}
This implies that for all $y\in\partial B_R(0)$
$$G_N(x,y)=-\frac{1}{\pi}\log|y-x|+\frac{1}{2\pi}\log\frac{|x|}{R}-\frac{|y|^2}{4\pi R^2}. $$
Therefore, 
\begin{align} \label{eq:SL_reg1}
u(x)=u_1(x) + \left (\frac{1}{2\pi} \log \frac{|x|}{R}\right)\int_{\partial B_R(0)} g(y) dS(y)
- \frac{1}{4\pi R^2} \int_{\partial B_R(0)} g(y) |y|^2 dS(y),
\end{align}
with
\begin{align} \label{eq:SL_reg2}
 u_1(x)&= -\frac{1}{\pi} \int_{\partial B_R(0)} g(y) \log |x-y| dS(y).
 \end{align}
 The function $u_1$ can, in turn, be written as 
 \begin{align} \label{eq:SL_reg3}
 u_1(x) &= -\frac{\log R}{\pi} \int_{\partial B_R(0)} g(y) dS(y) - \frac{R}{\pi} u_2(\frac{x}{R})
\end{align}
with
\begin{align} \label{eq:SL_reg4}
 u_2(\hat x ):= \int_{\partial B_1(0)} \hat g(y) \log|\hat x -y| dS(y),
 \qquad \hat g(y):=g(Ry)\ \text{for }y\in \partial B_1(0).
\end{align}

From Calderon-Zygmund theory it can be seen that the singular integral $u_2$ is such that
\begin{align} \label{eq:SL_reg5}
 \|Du_2\|_\infty + [Du_2]_{0,\alpha} + \|D^2 u_2\|_\infty + [D^2u_2]_{0,\alpha}
 \leq C (\|\hat g\|_\infty + [\hat g]_{0,\alpha} + \|{\hat g}'\|_\infty + [{\hat g}']_{0,\alpha})
\end{align}
in $\{\hat x: 1<|\hat x|<2\}$
(see \cite[Prop.\ 4.5]{CH19singular} for a direct proof).
The proposition then follows from \eqref{eq:SL_reg1}--\eqref{eq:SL_reg5}
and estimates for $\log|x|$ 
(recall that for the H\"older continuity, we can proceed as in Proposition \ref{prop2}).
\end{proof}

Using the arguments in Calderon-Zygmund theory (e.g.,\ as in \cite[Thm.\ 2.6.4]{Morrey66}),
but carefully looking at the dependence on $R$, it is possible to obtain the following estimate 
(see \cite[Prop.\ 5.4]{CH19singular} for a detailed proof).

\begin{proposition} \label{prop4}
For every positive $d$ and $r_{max}$, with $d<r_{max}$, there exists a constant $C(d, r_{max})$
such that if $d\leq R\leq r_{max}$, $f\in C_c^{0,\alpha}(\Omega')$, 
and $$u(y):=\int_{\mathbb{R}^2}f(y)\,G_N(x,y)dy,$$
then, in $B_{R+d}(0)\setminus  \overline{B}_R(0)$,
\begin{align*}
                 \left\|D u\right\|_{\infty}
                 +
                      [Du]_{0,\alpha}
                   + 
                   \left\|D^{2} u\right\|_{\infty}
                   +
                         [D^2 u]_{0,\alpha}
                         \leq 
                         C(d, r_{max}) ( \|f\|_\infty + [f]_{0,\alpha}). 
                        \end{align*}
\end {proposition}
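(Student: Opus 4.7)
The plan is to split $u$ according to the decomposition $G_N(x,y) = \Phi(y-x) - \phi^x(y)$ recorded in Section~\ref{se:notation_Green}. Writing $u = u_1 + u_2$ with
\[
u_1(x):=\int_{\R^2} f(y)\,\Phi(y-x)\,\dd y, \qquad u_2(x):=-\int_{\R^2} f(y)\,\phi^x(y)\,\dd y,
\]
these two pieces have very different natures and would be handled by separate arguments. In either case the constants obtained would be tracked to depend only on $d$ and $r_{max}$.

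For $u_1$, which is the ordinary Newtonian potential of $f$, I would invoke classical Calder\'on--Zygmund theory (as in, e.g., Gilbarg--Trudinger, Ch.~4) to obtain the estimate $\|D^2 u_1\|_\infty + [D^2 u_1]_{0,\alpha} \leq C(\|f\|_\infty + [f]_{0,\alpha})$, in which the constant depends only on the diameter of $\spt f$. Since $\spt f \subset \Omega'\subset B_{r_{max}+d}(0)$, this diameter is controlled by $d$ and $r_{max}$. The first-order bounds $\|Du_1\|_\infty$ and $[Du_1]_{0,\alpha}$ would then follow by direct integration against the locally integrable kernel $|y-x|^{-1}$ and estimation of its H\"older differences, using only the boundedness of $\spt f$.

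For $u_2$, the decisive observation is that the kernel $\phi^x(y) = \frac{1}{2\pi}\log|y-x^*| - \frac{|y|^2}{4\pi R^2}$ is in fact smooth in $x$ on the relevant region. Indeed, since $|x^*|= R^2/|x| \leq R$ whenever $|x|\geq R$, while every $y\in \Omega'$ satisfies $|y|\geq R + \frac{d}{3}$, we obtain the uniform separation $|y-x^*|\geq \frac{d}{3}$ for every $x\in B_{R+d}\setminus\overline{B}_R$ and $y\in\spt f$. Moreover, the term $-|y|^2/(4\pi R^2)$ does not depend on $x$, so it contributes only an $x$-independent constant to $u_2$. One can therefore differentiate under the integral sign, and any derivative $D_x^k\log|y-x^*|$ is controlled by products of $|y-x^*|^{-j}\leq (3/d)^j$ with derivatives of the smooth map $x\mapsto x^* = \frac{R^2}{|x|^2}x$, whose norms on $B_{R+d}\setminus\overline{B}_R$ are bounded in terms of $d$ and $r_{max}$ (thanks to $d\leq R\leq r_{max}$ and $|x|\geq R$).

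The main technical obstacle lies not in any single step but in the bookkeeping needed to confirm that every factor of $R$ arising in the estimates for $u_2$ is indeed controlled by the assumption $d\leq R\leq r_{max}$. For the H\"older seminorms $[Du_2]_{0,\alpha}$ and $[D^2 u_2]_{0,\alpha}$ one must also compare the relevant functions at nearby points; this would be done by interpolating separately in the radial and angular variables exactly as in Proposition~\ref{prop2}, using that every occurrence of $R$ remains in the compact interval $[d,r_{max}]$ bounded away from $0$ and $\infty$. Assembling the resulting bounds on $u_1$ and $u_2$ via the triangle inequality then yields the stated estimate on $\|Du\|_\infty + [Du]_{0,\alpha} + \|D^2 u\|_\infty + [D^2u]_{0,\alpha}$.
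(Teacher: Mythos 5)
Your proposal is correct and follows essentially the route the paper indicates: the paper only sketches this proof (Calder\'on--Zygmund theory as in Morrey, ``carefully looking at the dependence on $R$'', with details deferred to the companion paper), and your splitting $G_N(x,y)=\Phi(y-x)-\phi^x(y)$ is the natural detailed version of that sketch, in the same spirit as the treatment of the single-layer potential in Proposition \ref{prop11}. In particular, your two key points are sound: the Newtonian part is handled by classical potential estimates with constants controlled by $\operatorname{diam}(\spt f)\leq C(d,r_{max})$, and the separation $|y-x^{*}|\geq d/3$ for $|x|>R$, $y\in\Omega'$ (together with $d\leq R\leq r_{max}$) makes the reflected kernel uniformly smooth, so its contribution and the H\"older seminorms (via the annulus argument of Proposition \ref{prop2}) are controlled as claimed.
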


\subsection{Estimates for the Neumann problem}

\begin{proposition} \label{prop12}
Let $d$ and $r_{max}$ by arbitrary positive numbers with $d<r_{max}$. 
There exists a constant $C(d,r_{max})$ such that anytime that
$E$ is a domain satisfying \eqref{eq:genericE}--\eqref{eq:d}
and 
$u$ is the unique solution to the Neumann problem \eqref{eq:defNeumann}
for some continuous datum $g$,
$$\Vert u \Vert_{L^{1}(E)}\leq C(d, r_{max})\Vert g\Vert_{\infty}.   $$
\end {proposition}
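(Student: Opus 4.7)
The plan is to combine the weak formulation of \eqref{eq:defNeumann} with the Poincar\'e inequality of Theorem \ref{thm 2} and the trace inequality of Lemma \ref{trace} in a standard energy argument; the only non-routine point is to verify that every constant depends on $d$ and $r_{max}$ alone and not on the number $n$ of holes.

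First I would test the equation against $u$ itself, obtaining the identity $\int_E |Du|^2 \dd y = \int_{\partial E} g\,u \dd S$, which by H\"older's inequality gives
$$\int_E |Du|^2 \dd y \leq \|g\|_\infty \sqrt{|\partial E|}\, \|u\|_{L^2(\partial E)}.$$
The total boundary length $|\partial E| = 2\pi r_0 + 2\pi\sum_{i=1}^n r_i$ is bounded by a constant $C(d,r_{max})$ because the Remark after Theorem \ref{thm 2}, applied with $\delta := d/r_{max}$, forces $n\leq (r_{max}/d)^2$, while each $r_i \leq r_{max}$.

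Next I would control $\|u\|_{L^2(\partial E)}$ by applying Lemma \ref{trace} on each of the annuli $B_{r_i+d}(z_i)\setminus\overline{B}_{r_i}(z_i)$, $i=1,\ldots,n$, and on $B_{r_0}(z_0)\setminus\overline{B}_{r_0-d}(z_0)$. By \eqref{eq:d} these $n+1$ annuli are contained in $E$ and mutually disjoint, so summing the resulting inequalities yields
$$\|u\|_{L^2(\partial E)}^2 \leq \tfrac{8}{d}\,\|u\|_{L^2(E)}^2 + 4d\,\|Du\|_{L^2(E)}^2.$$
Combined with Theorem \ref{thm 2}, which under the zero-mean condition $\int_E u =0$ and the bound $r_0\leq r_{max}$ gives $\|u\|_{L^2(E)}\leq C(d,r_{max})\|Du\|_{L^2(E)}$, this upgrades to $\|u\|_{L^2(\partial E)}\leq C(d,r_{max})\|Du\|_{L^2(E)}$.

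Plugging back into the first display and cancelling one factor of $\|Du\|_{L^2(E)}$ (the case $Du\equiv 0$ being trivial) gives $\|Du\|_{L^2(E)}\leq C(d,r_{max})\|g\|_\infty$. Poincar\'e then promotes this to $\|u\|_{L^2(E)}\leq C(d,r_{max})\|g\|_\infty$, and a final application of Cauchy-Schwarz together with $|E|\leq \pi r_{max}^2$ delivers the desired $L^1$ bound. No step is a genuine obstacle; the only delicate point is the bookkeeping that keeps every constant free of any dependence on $n$, which is made possible by the uniform estimate $n\leq (r_{max}/d)^2$ and by the disjointness of the annular shells used in the trace step.
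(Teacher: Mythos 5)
Your argument is correct and is essentially the paper's own proof: both rest on testing the equation with $u$, the trace inequality of Lemma \ref{trace} applied on the disjoint width-$d$ annuli, the Poincar\'e bound of Theorem \ref{thm 2}, and the remark $n\leq\delta^{-2}$ to keep all constants depending only on $d$ and $r_{max}$. The only cosmetic difference is that you cancel a factor of $\Vert Du\Vert_{L^2(E)}$ directly, whereas the paper runs the same estimate through Cauchy's inequality with a parameter $A$; the content is identical.
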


\begin{proof}
 First note that:
$$ \int_{E}|u|dy\leq |E|^{\frac{1}{2}}\Vert u\Vert_{L^{2}(E)}\leq C_P(E)|E|^{\frac{1}{2}}\Vert Du\Vert_{L^{2}(E)},$$
$C_P(E)$ being Poincar\'e's constant \eqref{def_C_P}.
Integrating by parts we get:
$$\int_{E}u\Delta u dy=\int_{\partial E}ugdS(y)-\int_{ E}|Du|^{2}dy=0.$$
Moreover:
$$ \int_{E}|Du|^2dy\leq \Vert g\Vert_{L^2(\partial E)}\Vert u\Vert_{L^2(\partial E)}.$$
Using Cauchy's inequality, we get:
$$  \Vert Du\Vert_{L^{2}(E)}\leq \frac{1}{2^{\frac{1}{2}}}\left( A\Vert g\Vert_{L^{2}(\partial E)} +\frac{\Vert u\Vert_{L^{2}(\partial E)}}{A} \right).  $$
Furthermore, using Lemma \ref{trace} and the Poincar\'e's constant, we obtain:
$$ \int_{\partial E}u^2dS=\sum_{k=0}^{n}\int_{\partial B_{r_k}(z_k)}u^2dS\leq 
C\left(\int_{B_{r_0}(z_0)\setminus B_{r_0-d}(z_0)}d^{-1}u^2+d|Du|^2dy\right)                  $$
$$ +C\left(  \sum_{k=1}^{n}\int_{B_{r_k+d}(z_k)\setminus B_{r_k}(z_k)}d^{-1}u^2+d|Du|^2dy       \right)$$
$$\leq C\left( d^{-1}\int_{E}u^2dy+\int_{E}d|Du|^2dy    \right)\leq C(d^{-1}C_P(E)^2+d)\int_{E}|Du|^2dy$$
\noindent Choosing $A=2^{\frac{1}{2}}C(d^{\frac{-1}{2}}C_P(E)+d^{\frac{1}{2}})$ we deduce that:
$$  \Vert Du\Vert_{L^{2}(E)}\leq 2^{\frac{1}{2}}A\Vert g\Vert_{L^2(\partial E)}\leq C(d^{\frac{-1}{2}}C_P(E)+d^{\frac{1}{2}})n^{\frac{1}{2}}r_0^{\frac{1}{2}}\Vert g\Vert_{\infty}.   $$
\noindent Finally, we obtain:
$$ \Vert u \Vert_{L^{1}(E)}\leq C\cdot |E|^{\frac{1}{2}}C_P(E)(d^{\frac{-1}{2}}C_P(E)+d^{\frac{1}{2}})n^{\frac{1}{2}}r_0^{\frac{1}{2}}\Vert g\Vert_{\infty}.   
$$
The proposition follows by applying Theorem \ref{thm 2}
(and the remark after its statement, which shows that $n\leq \delta^{-2}$)
with $\delta=\frac{1}{2}\frac{d}{r_{max}}$. (Note that $|E|\leq \pi r_0^2 \leq \pi r_{max}^2$.)
\end{proof}

\begin{thm}
  \label{thm1}
  Let $d$ and $r_{max}$ by arbitrary positive numbers with $d<r_{max}$. 
There exists a constant $C(d,r_{max})$ such that anytime that
$E$ is a domain satisfying \eqref{eq:genericE}--\eqref{eq:d}
and 
$u$ is the unique solution to the Neumann problem \eqref{eq:defNeumann}
for some  $g\in C^{1,\alpha}(\bigcup_{k=0}^n \partial B_{r_k}(z_k))$,
$$
  \|Du\|_\infty(E) + [Du]_{0,\alpha}(E) + \|D^2 u\|_\infty (E)
  + [D^2u]_{0,\alpha}(E)
  \leq C(d,r_{max})(\|g\|_\infty + [g]_{0,\alpha} +\|g'\|_\infty +[g']_{0,\alpha}).
  $$
\end{thm}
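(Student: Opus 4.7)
The plan is to combine the $L^1$ control for $u$ provided by Proposition \ref{prop12} with the interior estimates of Section \ref{se:interior} and the boundary estimates of Section \ref{se:circles}, glued together by a partition-of-unity argument localized around each circular boundary component. Since the remark after Theorem \ref{thm 2} says that $n\leq(2r_{max}/d)^{2}$, any constant that depends on $n$ is automatically controlled by $d$ and $r_{max}$, so the final estimate will have the required uniform dependence.

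The first step is to apply Proposition \ref{prop12} to deduce $\|u\|_{L^{1}(E)}\leq C(d,r_{max})\|g\|_{\infty}$. Next, for every $k\in\{1,\dots,n\}$ the annulus $\Omega_{k}:=\{y:r_{k}<|y-z_{k}|<r_{k}+d\}$ lies inside $E$ by \eqref{eq:d}, and on $\Omega_{k}$ the restriction of $u$ is harmonic, with Neumann data $g$ on the inner circle. I would translate to $z_{k}=0$, pick a cut-off $\zeta_{k}$ as in Lemma \ref{lemma1} (equal to $1$ for $|y-z_{k}|\leq r_{k}+d/3$ and supported in $|y-z_{k}|\leq r_{k}+2d/3$), and apply Proposition \ref{prop1} to $\zeta_{k}u$. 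This yields the representation
\begin{equation*}
\zeta_{k}u(x)=C-\int_{\partial B_{r_{k}}}g(y)G_{N}(x,y)\,dS(y)-\int_{\Omega_{k}}\Delta(\zeta_{k}u)(y)\,G_{N}(x,y)\,dy.
\end{equation*}
Proposition \ref{prop11} estimates the boundary layer directly in terms of $\|g\|_{\infty}+[g]_{0,\alpha}+\|g'\|_{\infty}+[g']_{0,\alpha}$, with constants harmless because $d\leq r_{k}\leq r_{max}$. Lemma \ref{lemma1} bounds $\|\Delta(\zeta_{k}u)\|_{\infty}+[\Delta(\zeta_{k}u)]_{0,\alpha}$ by $C(d,r_{max})\|u\|_{L^{1}(\Omega_{k})}$, and then Proposition \ref{prop4} turns this into the desired control on the volume integral, with $\|u\|_{L^{1}(\Omega_{k})}\leq\|u\|_{L^{1}(E)}$ absorbing everything into $\|g\|_{\infty}$ by Step 1.

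For the outer circle $\partial B_{r_{0}}(z_{0})$, I repeat the argument inside the annulus $\Omega_{0}:=\{y:r_{0}-d<|y-z_{0}|<r_{0}\}$; this is just the interior version of the same computation, since the Neumann Green's function of Section \ref{se:notation_Green} is precisely the one for the disk, so Proposition \ref{prop1} applies after reversing the outward normal, and Propositions \ref{prop11}, \ref{prop4} apply verbatim. On the complementary open set $E\setminus\bigcup_{k=0}^{n}\{\zeta_{k}=1\}$, which lies at distance at least $d/3$ from $\partial E$, $u$ is harmonic and the required $L^{\infty}$ and H\"older bounds on $u$, $Du$, $D^{2}u$ follow from Proposition \ref{prop2} (applied to $u$ minus the mean, so that $\|u\|_{L^{1}}$ is the quantity that enters) again in terms of $\|u\|_{L^{1}(E)}$. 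Adding at most $n+1$ such local estimates gives the theorem.

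The subtle point I expect to have to check carefully is the tracking of the $R$-dependence through Proposition \ref{prop11}, where the constant involves $\min\{1,R\}^{-\alpha}+\max\{1,R\}^{\alpha}$: since every $r_{k}$ lies in $[d,r_{max}]$, this factor is bounded by a constant of the form $C(d,r_{max})$ and causes no trouble. A secondary but real technical annoyance is that the representation formula produces an unknown additive constant $C$, which does not affect the derivative estimates we need; still, one must verify that this does not spoil the $L^{1}$ bookkeeping when gluing the boundary and interior pieces back together. Once these are handled, the bound assembles into the stated inequality with a constant depending only on $d$ and $r_{max}$.
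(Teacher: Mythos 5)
Your proposal follows essentially the same route as the paper: Proposition \ref{prop12} for the $L^1$ bound, the cut-off representation formula of Proposition \ref{prop1} near each hole with Proposition \ref{prop11} for the single-layer term and Lemma \ref{lemma1} plus Proposition \ref{prop4} for the volume term, the analogous construction (with the interior-of-a-disk Green's representation and $R=r_0-d$) at the outer circle, and harmonic interior estimates in between. The only point you gloss over is how the local bounds assemble into the global H\"older seminorms over the multiply connected interior region and across different pieces of $E$ -- the paper handles this with a chaining argument (at most $2n+1$ triangle inequalities along straight segments and arcs of the circles $\partial B_{r_k+d/3}(z_k)$, using local harmonic estimates on segments and Proposition \ref{prop2} on arcs) -- but this is a routine completion rather than a genuine gap.
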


\begin{proof}
 Near the holes, in $\bigcup_{k=1}^n B_{r_k+d/3}(z_k)\setminus \overline{B}_{r_k}(z_k)$,
 the result follows  from Proposition \ref{prop1}, Proposition \ref{prop11}, 
Proposition \ref{prop4}, Lemma \ref{lemma1} and Proposition \ref{prop12}.

Near the exterior boundary, in $B_{r_0}(z_0)\setminus \overline{B}_{r_0-d/3}(z_0)$,
the result is obtained analogously, taking care of choosing $R=r_0-d$ in the definition 
of $\Omega$ in \eqref{eq:Omega} and of changing the representation formula
of Proposition \ref{prop1} to the corresponding one for the interior of a disk:
$$
\zeta(x) u(x)=C+\int_{\partial B_R}\frac{\partial u}{\partial \nu}\,G_N(x,y)\,dS(y)
-\int_{\Omega}\Delta (\zeta u)\, G_N(x,y)\,dy.
$$

The regularity in the interior, in $B_{r_0-d/3}(z_0)\setminus \bigcup_{k=1}^n \overline{B}_{r_k+d/3}(z_k)$,
follows from local regularity for harmonic functions and Proposition \ref{prop2} (using triangle inequality at most $2n+1$ times): 
 join $x$ and $z$ with a straight line, 
 then the segment intersects at most the $n$ holes. 
 In that case, join the points 
 using segments of the above straight line and segments of circles of the form $\partial B_{r_k+\frac{d}{3}}(z_k)$ 
 (for straight lines use local estimates for harmonic functions and for circles use Proposition \ref{prop2}).
\end{proof}

 \subsection{Estimate for the velocity fields}
  
\begin{thm}
 \label{th:Schauder}
 Let $n\in \N$ and ${\mathcal{B}}=B_{R_0}(0)\subset \R^2$. Suppose that the configuration 
  $\Big ( (a_i)_{i=1}^n, (v_i)_{i=1}^n \Big )$ is attainable. 
  Let $R_1, \ldots, R_n$ be any positive numbers 
  verifying \eqref{eq:disjointEt}, for the evolutions $r_i(t)$
defined in \eqref{eq:defRiri}.
Let $E(t)$ be as in \eqref{eq:domainsEt}.
Then the velocity fields $v(\cdot, t)$
defined in \eqref{eq:defDM}
are such that 
\begin{align*}
 &\div v(y, t)=0\quad \text{for all }t\in[1,\lambda],\ y\in E(t), \\
 & v(z_i(t) + r_i(t) e^{i\theta})= \frac{\dd r_i}{\dd t} e^{i\theta} 
  \quad \text{for all }t\in [1,\lambda],\ i\in \{1,\ldots, n\},\ \theta\in [0,2\pi],
  \\
  \text{and}\quad & \sup_{t\in [1,\lambda]} \|Dv(\cdot, t)\|_{L^\infty(E(t))}<\infty.
\end{align*}
\end{thm}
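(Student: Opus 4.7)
The divergence-free property is immediate from the representation $v(\cdot,t)=D_y\phi_t+D_y^\perp\psi_t$: $\div D_y\phi_t=\Delta\phi_t=0$ by \eqref{phi bvp}, while $\div D_y^\perp\psi_t\equiv 0$ identically for any $C^2$ scalar $\psi_t$. To verify the prescribed value on $\partial B_{r_i(t)}(z_i(t))$, I would parametrize $y=z_i(t)+r_i(t)e^{i\theta}$ and use the radial/tangential frame $\hat r=e^{i\theta}$, $\hat\theta=ie^{i\theta}$; a direct computation yields $D^\perp\psi_t\cdot\hat r=r_i^{-1}\partial_\theta\psi_t$ and $D^\perp\psi_t\cdot\hat\theta=-\partial_r\psi_t$. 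Since $\zeta(0)=1$ and $q_t(y)=y$ on $\partial E(t)$, the function $\psi_t$ vanishes identically on $\partial E(t)$, so $\partial_\theta\psi_t$ vanishes there and the radial component of $v$ reduces to $\partial\phi_t/\partial\hat r=\dd r_i/\dd t$ by \eqref{phi bvp}. The tangential component $v\cdot\hat\theta=r_i^{-1}\partial_\theta\phi_t-\partial_r\psi_t$ vanishes because, for a cutoff with $\zeta\equiv 1$ near $0$ (so $\zeta'(0)=0$), differentiating the defining formula for $\psi_t$ gives $\partial_r\psi_t|_{\partial E(t)}=\partial\varphi_t/\partial\hat r=\partial\phi_t/\partial\tau$ by \eqref{varphi bvp}.

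The core of the theorem is the uniform Lipschitz bound $\sup_t\|Dv(\cdot,t)\|_{L^\infty(E(t))}<\infty$. The plan is to apply Theorem \ref{thm1} twice in succession. The key observation is that, by \eqref{eq:disjoint_d} and \eqref{eq:rmax}, every $E(t)$ with $t\in[1,\lambda]$ lies in the family of domains \eqref{eq:genericE}--\eqref{eq:d} with parameters $d$ and $r_{max}$ that do not depend on $t$, so the constants produced by Theorem \ref{thm1} are $t$-independent.

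The first application is to \eqref{phi bvp}. Its Neumann datum is piecewise constant, equal to $\dd r_0/\dd t=R_0$ on the outer boundary and to $\dd r_i/\dd t$ on $\partial B_{r_i(t)}(z_i(t))$. Item (\ref{it:Lsmooth}) of Definition \ref{de:attainable} gives $L_i^2\in C^1([1,\lambda])$; combined with the lower bound $r_i\geq d$ from \eqref{eq:disjoint_d}, the chain rule shows that $\dd r_i/\dd t=(\dd L_i^2/\dd t)/(2r_i)$ is continuous and hence uniformly bounded on $[1,\lambda]$. Since the datum is constant on every connected component of $\partial E(t)$, its full $C^{1,\alpha}$ norm reduces to its supremum, and Theorem \ref{thm1} delivers uniform-in-$t$ bounds on $\|D\phi_t\|_\infty+[D\phi_t]_{0,\alpha}+\|D^2\phi_t\|_\infty+[D^2\phi_t]_{0,\alpha}$.

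The second application is to \eqref{varphi bvp}, whose datum is $\partial\phi_t/\partial\tau$: its $C^{1,\alpha}$ norm on $\partial E(t)$ is controlled by the $C^{2,\alpha}$ norm of $\phi_t$ just obtained, so Theorem \ref{thm1} gives a uniform-in-$t$ bound on $\|D^2\varphi_t\|_{L^\infty(E(t))}$. Finally, since $\psi_t=\varphi_t-\zeta(2\dist(\cdot,\partial E(t))/d)\,\varphi_t\circ q_t$ and both $\zeta$ and the piecewise-radial projection $q_t$ have derivatives bounded in terms of $d$ and $r_{max}$ alone, the chain and product rules transfer the bound to $\|D^2\psi_t\|_{L^\infty(E(t))}$, completing the argument. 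I expect the main obstacle to be the careful propagation of the uniform H\"older estimate on $\partial\phi_t/\partial\tau$ through the second Neumann problem; this ultimately hinges on the time-independence of the geometric parameters of $E(t)$ guaranteed by \eqref{eq:disjoint_d}.
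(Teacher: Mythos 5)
Your argument is essentially the paper's own proof: the uniform bound is obtained exactly as you describe, by applying Theorem \ref{thm1} first to \eqref{phi bvp} (whose piecewise-constant datum is bounded uniformly in $t$ because $\frac{\dd r_i}{\dd t}=\frac{\dd (L_i^2)/\dd t}{2r_i(t)}$ with $L_i^2\in C^1([1,\lambda])$ and $r_i(t)\geq d$), then to \eqref{varphi bvp} with datum $\partial\phi_t/\partial\tau$, the geometric parameters $d$ and $r_{max}$ being $t$-independent, and finally transferring the estimate to $\psi_t$ through the cutoff and the projection $q_t$. The only detail you leave implicit is that differentiating $\zeta(\cdot)\,\varphi_t\circ q_t$ also requires a bound on $\Vert\varphi_t\Vert_{\infty}$ itself, which Theorem \ref{thm1} does not provide directly and which the paper gets from the zero-mean normalization in \eqref{varphi bvp} via $\Vert\varphi_t\Vert_{\infty}\leq C r_0\Vert D\varphi_t\Vert_{\infty}$; your remark that $\zeta$ should be chosen with $\zeta'(0)=0$ so that the boundary condition holds exactly is correct and consistent with the paper's choice of the analogous cutoff $\eta$.
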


\begin{proof}
Let
$d$ be a positive number for which \eqref{eq:disjoint_d} is satisfied.
  As observed in \eqref{eq:rmax}, at all times the radii of the $n$ holes 
  are controlled by $$r_{max}:=\lambda R_0.$$
  Thus, for the maps $\psi_t$ of \eqref{varphi bvp}, Theorem \ref{thm1} yields
  $$
  \|D\varphi_t\|_\infty + \|D^2\varphi_t\|_\infty 
  \leq C(d,r_{max}) \left(
  \left\Vert \frac{\partial \phi_t}{\partial \tau} \right\Vert_{\infty} 
  +\left[ \frac{\partial \phi_t}{\partial \tau} \right]_{0,\alpha} 
  +\left\Vert\frac{\partial^2\phi_t}{\partial\tau^2}  \right\Vert_{\infty}
  +\left[ \frac{\partial^2\phi_t}{\partial\tau^2} \right]_{0,\alpha}
  \right).
  $$
  Now, it is easy to see that:
$$   \left\Vert \frac{\partial \phi_t}{\partial \tau} \right\Vert_{\infty}\leq  \Vert D\phi_t \Vert_{\infty},
\qquad   \left[ \frac{\partial \phi_t}{\partial \tau} \right]_{0,\alpha} \leq C\left(   d^{-\alpha}\Vert D\phi_t \Vert_{\infty}+[D\phi_t]_{0,\alpha}  \right)$$
(note that $\frac{\partial \phi_t}{\partial \tau}= D\phi_t \cdot \tau$, so 
the estimate of its H\"older norm needs to take into account not only spatial variations
of $D\phi_t$ but also the variation of the tangent vector from one point on $\partial B_{r_k}(z_k)$
to another).
Also, 
$$   \left\Vert \frac{\partial^2 \phi}{\partial \tau^2} \right\Vert_{\infty} \leq C \Vert D^2\phi\Vert_{\infty},
\qquad
   \left[ \frac{\partial^2 \phi}{\partial \tau^2} \right]_{0,\alpha}\leq C\left(  d^{-\alpha}\Vert D^2\phi \Vert_{\infty}+\left[D^2\phi\right]_{0,\alpha} \right)$$
since 
$$\frac{\partial^2\phi}{\partial \tau^2}=(D(D\phi\cdot \tau))\cdot \tau=(D^2\phi\cdot\tau)\cdot \tau+
\underbrace{(D\tau\cdot D\phi)\cdot \tau}_{=0}.$$
Moreover, at every fixed $t$ the function $g$ from $\partial E(t)$ to $\R$ given by 
$$g(z_i(t)+r_i(t)e^{i\theta})= \frac{\dd r_i(t)}{\dd t},\quad i\in \{1,\ldots, n\}$$
is constant on each connected component of $\partial E(t)$. Hence
Theorem \ref{thm1} gives
$$
\Vert D\phi_t\Vert_{\infty}+\left[D\phi_t\right]_{0,\alpha}+
\Vert D^2\phi_t \Vert_{\infty}+\left[D^2\phi_t\right]_{0,\alpha} \leq C(d,r_{max}) \max_i \left |\frac{\dd r_i}{\dd t}\right |.
$$
By \eqref{eq:defRiri} and item (\ref{it:Lsmooth}) in Definition \ref{de:attainable}, 
the quantity on the right-hand side, which can be written as
$$\max \frac{\left | \frac{\dd (L_i^2)}{\dd t} \right |}{r_i(t)},$$
remains bounded for all times (recall that $r_i(t)\geq d$ for all $t$). 
Consequently, $\|D^2\phi_t\|_{L^\infty}$, $\|D^2\varphi_t\|_{L^\infty}$, and 
$\|D\varphi_t\|_{L^\infty}$ remain bounded in time.

On the other hand, it is easy to see that:
$$\Vert D\psi\Vert_{\infty}\leq C\left(\frac{1}{d}\Vert\varphi\Vert_{\infty}+\Vert D\varphi\Vert_{\infty}\right),
\qquad
\Vert D^2\psi\Vert_{\infty}\leq C\left(\frac{1}{d^2}\Vert\varphi\Vert_{\infty}+\frac{1}{d}\Vert D\varphi\Vert_{\infty}+\Vert D^2\varphi\Vert_{\infty}\right).$$

Note that using the fundamental theorem of calculus one can obtain:
$\Vert \varphi \Vert_{\infty}\leq Cr_0\Vert D\varphi\Vert_{\infty}$
(using that there exists a point where $\varphi$ vanishes). 
This completes the proof.
\end{proof}

\section{Existence of deformations opening only round cavities}
\label{se:round}

In this section we consider a given attainable configuration 
$\Big ( (a_i)_{i=1}^n, (v_i)_{i=1}^n \Big )$
and work with the evolutions $z_i(t)$ and $L_i(t)$ of the centers and radii of Definition \ref{de:attainable}.
We set $\lambda>1$ to be the stretch factor given by \eqref{eq:defLambda}.
We also fix radii $R_1, \ldots, R_n$ verifying \eqref{eq:disjointEt}, for the evolutions $r_i(t)$
defined in \eqref{eq:defRiri}.

\begin{proposition} \label{pr:scission-near}
Let $n\in \N$ and ${\mathcal{B}}=B_{R_0}(0)\subset \R^2$. Suppose that the configuration 
  $\Big ( (a_i)_{i=1}^n, (v_i)_{i=1}^n \Big )$ is attainable. 
  Let $R_1, \ldots, R_n$ be any positive numbers 
  verifying \eqref{eq:disjointEt}.
 Define $u_{\text{near}}$ to be the function from 
 $\bigcup_i \overline{B}_{R_i}(a_i)$ to $\R^2$
 given by 
 $$ 
 u_{\text{near}}(a_i+re^{i\theta}) :=
  z_i(\lambda) + \sqrt{L_i(\lambda)^2 + r^2} e^{i\theta}, \quad 0<r\leq R_i,\quad i\in \{1,\ldots, n\}.
$$
Then $u_{\text{near}}$ is one-to-one a.e., satisfies $\det Du_{\text{near}}\equiv 1$ a.e.,
and is such that 
$$|\imT(u_{\text{near}}, B_\eps(a_i))|=v_i+\pi\eps^2\quad \text{for all } i\in \{1,\ldots, n\}.$$
In addition, for every small $\eps>0$
$$
    \int_{\bigcup \{x:\eps<|x-a_i|<R_i\}} \frac{|Du_{\text{near}}|^2}{2}\dd x \leq 
    \sum_{i=1}^n  \pi R_i^2 + \sum_{i=1}^n v_i\log R_i +  \left (\sum_{i=1}^n v_i\right )|\log \eps|.
$$
\end{proposition}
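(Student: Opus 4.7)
The plan is to verify each claim directly from the explicit radial formula, exploiting the fact that $u_{\text{near}}$ restricted to each punctured annulus $\{0<|x-a_i|\leq R_i\}$ is a composition of a radial stretch with a rigid translation. First, observe that the radial profile $\Phi_i(r):=\sqrt{L_i(\lambda)^2+r^2}$ is smooth and strictly increasing on $(0,R_i]$, so $u_{\text{near}}$ is one-to-one on each punctured disk $B_{R_i}(a_i)\setminus\{a_i\}$, with image the annulus $\{L_i(\lambda)<|y-z_i(\lambda)|\leq \sqrt{L_i(\lambda)^2+R_i^2}\}$. To get injectivity a.e.\ on the whole union, I invoke condition \eqref{eq:disjointEt} evaluated at $t=\lambda$, which says precisely that the closed disks $\overline{B}_{r_i(\lambda)}(z_i(\lambda))$ are pairwise disjoint; since $r_i(\lambda)=\sqrt{L_i(\lambda)^2+R_i^2}$, the images lie in disjoint sets.

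Next, for the Jacobian I compute in polar coordinates around $a_i$: writing $u_{\text{near}}(a_i+re^{i\theta})=z_i(\lambda)+\Phi_i(r)e^{i\theta}$, the standard formula gives
\begin{equation*}
  \det Du_{\text{near}}(a_i+re^{i\theta}) \,=\, \Phi_i'(r)\,\frac{\Phi_i(r)}{r} \,=\, \frac{r}{\sqrt{L_i(\lambda)^2+r^2}}\cdot \frac{\sqrt{L_i(\lambda)^2+r^2}}{r}\,=\,1.
\end{equation*}
For the topological image statement, $u_{\text{near}}|_{\partial B_\eps(a_i)}$ parametrises $\partial B_{\Phi_i(\eps)}(z_i(\lambda))$ with degree one, so $\imT(u_{\text{near}},B_\eps(a_i))=\overline{B}_{\Phi_i(\eps)}(z_i(\lambda))$, whose area is $\pi(L_i(\lambda)^2+\eps^2)=v_i+\pi\eps^2$ using $\pi L_i(\lambda)^2=v_i$ from Definition \ref{de:attainable}.

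For the energy bound, in polar coordinates
\begin{equation*}
  |Du_{\text{near}}|^2 \,=\, |\Phi_i'(r)|^2+\frac{\Phi_i(r)^2}{r^2}\,=\,\frac{r^2}{L_i(\lambda)^2+r^2}+\frac{L_i(\lambda)^2+r^2}{r^2},
\end{equation*}
so
\begin{equation*}
  \int_{\eps<|x-a_i|<R_i}\frac{|Du_{\text{near}}|^2}{2}\dd x \,=\, \pi\int_\eps^{R_i}\!\left(\frac{r^3}{L_i(\lambda)^2+r^2}+\frac{L_i(\lambda)^2}{r}+r\right)dr.
\end{equation*}
Splitting $\frac{r^3}{L_i(\lambda)^2+r^2}=r-\frac{L_i(\lambda)^2 r}{L_i(\lambda)^2+r^2}$ and integrating yields the closed form
\begin{equation*}
  \pi(R_i^2-\eps^2)-\frac{v_i}{2}\log\frac{L_i(\lambda)^2+R_i^2}{L_i(\lambda)^2+\eps^2}+v_i\log\frac{R_i}{\eps}.
\end{equation*}
Dropping the two non-positive terms $-\pi\eps^2$ and $-\tfrac{v_i}{2}\log\tfrac{L_i(\lambda)^2+R_i^2}{L_i(\lambda)^2+\eps^2}$ and summing in $i$ gives the bound claimed in the statement.

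Nothing here presents a real obstacle; the computation is routine radial calculus. The only point that requires care is the a.e.\ global injectivity, which follows cleanly from the disjointness condition \eqref{eq:disjointEt} at $t=\lambda$, and the careful separation of the terms in the energy integral so that the discarded parts are genuinely non-positive and the retained ones match the form stated in the proposition.
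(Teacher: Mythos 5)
Your proof is correct and follows essentially the same route as the paper: an explicit computation of $Du_{\text{near}}$ in polar coordinates, $\det Du_{\text{near}}=\Phi_i'(r)\Phi_i(r)/r=1$, and integration of $|Du_{\text{near}}|^2$ over each annulus. The only cosmetic differences are that you evaluate the energy integral exactly and then discard the non-positive terms, whereas the paper simply bounds $\frac{r^2}{L_i(\lambda)^2+r^2}\leq 1$ before integrating, and that you spell out the a.e.\ injectivity and topological-image claims (via disjointness of the images from \eqref{eq:disjointEt} at $t=\lambda$ and the degree computation) which the paper leaves implicit.
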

\begin{proof}
  Given $i\in\{1,\ldots, n\}$, $r\in (0,R_i)$ and $\theta\in[0,2\pi]$
  \begin{align}
   Du_{\text{near}}(a_i+re^{i\theta}) &= \frac{r}{\sqrt{L_i(\lambda)^2 +r^2}} e^{i\theta}\otimes e^{i\theta}
      + \sqrt{1+\frac{L_i(\lambda)^2}{r^2}} ie^{i\theta}\otimes ie^{i\theta}.
  \end{align}
  Hence $\det Du_{\text{near}}\equiv 1$ and 
  \begin{align}
   \int_{\bigcup \{x:\eps<|x-a_i|<R_i\}} \frac{|Du_{\text{near}}|^2}{2}\dd x &\leq  
   \sum_i \int_{\eps}^{R_i} \left (1+\left (1+\frac{L_i(\lambda)^2}{r^2}\right )\right ) \cdot \pi r \dd r.
  \end{align}
\end{proof}

\begin{thm}
 \label{th:scission-away}
 Let $n\in \N$ and ${\mathcal{B}}=B_{R_0}(0)\subset \R^2$. Suppose that the configuration 
  $\Big ( (a_i)_{i=1}^n, (v_i)_{i=1}^n \Big )$ is attainable. 
  Let $R_1, \ldots, R_n$ be any positive numbers 
  verifying \eqref{eq:disjointEt}.
  There
  exists $u_{\text{far}} \in H^1( \mathcal{B}\setminus \bigcup_1^n \overline{B}_{R_i}(a_i), \R^2)$, 
  satisfying $u_{\text{far}}(x)=\lambda x$ on $\partial \mathcal{B}$; $\det Du_{\text{far}}\equiv 1$ in $ 
 \mathcal{B}\setminus \bigcup_1^n \overline{B}_{R_i}(a_i)$; condition (INV); and 
 $$u_{\text{far}}(a_i+R_ie^{i\theta})= z_i(\lambda) + \sqrt{L_i(\lambda)^2+R_i^2} e^{i\theta},
 \quad \forall\,i\in\{i,\ldots, n\}\ \forall\,\theta\in [0,2\pi].$$
\end{thm}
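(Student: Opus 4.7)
The plan is to construct $u_{\text{far}}$ as the time-$\lambda$ value of the flow of the divergence-free velocity field of Dacorogna \& Moser \cite{DaMo90}, chosen so that it carries the domain $E(1)=\mathcal{B}\setminus\bigcup_i\overline{B}_{R_i}(a_i)$ through the admissible intermediate configurations $E(t)$ of \eqref{eq:domainsEt}--\eqref{eq:defRiri} onto the target $E(\lambda)$. The existence of the evolutions $z_i(t)$, $r_i(t)$ and of a margin $d>0$ for which \eqref{eq:disjointEt}--\eqref{eq:disjoint_d} hold, together with the uniform bound \eqref{eq:rmax}, is already guaranteed by the attainability hypothesis.

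The first step is to check that the field $\hat v(\cdot,t):=v(\cdot,t)+\tilde v(\cdot,t)$ defined in \eqref{eq:defDM}--\eqref{eq:defVtilde} is divergence-free on $E(t)$ and generates the correct motion of $\partial E(t)$. The component $v=D\phi_t+D^{\perp}\psi_t$ is divergence-free since $\phi_t$ is harmonic and $D^{\perp}\psi_t$ always has zero divergence; the Neumann condition \eqref{phi bvp} produces $v\cdot\nu=\tfrac{\mathrm{d}r_i}{\mathrm{d}t}$ on each $\partial B_{r_i(t)}(z_i(t))$, and the harmonic corrector $\psi_t$ coming from \eqref{varphi bvp}, together with the cutoff in $q_t$, cancels the tangential component on the same circles, so that $v=\tfrac{\mathrm{d}r_i}{\mathrm{d}t}\,e^{i\theta}$ there (and analogously on $\partial B_{r_0(t)}(0)$). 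The corrector $\tilde v=D^{\perp}w$ is divergence-free, vanishes on the outer circle, and, by $\eta(0)=1$ and $\eta'(0)=0$, equals $\tfrac{\mathrm{d}z_i}{\mathrm{d}t}$ on $\partial B_{r_i(t)}(z_i(t))$. Altogether
\begin{equation*}
\hat v\bigl(z_i(t)+r_i(t)e^{i\theta},\,t\bigr)=\frac{\mathrm{d}z_i}{\mathrm{d}t}+\frac{\mathrm{d}r_i}{\mathrm{d}t}\,e^{i\theta},
\end{equation*}
which is precisely the velocity of the material curve $t\mapsto z_i(t)+r_i(t)e^{i\theta}$ traversed at fixed angle $\theta$, and similarly $\hat v(tR_0e^{i\theta},t)=R_0e^{i\theta}$ on the outer boundary.

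I then solve the flow equation $\partial_tf(x,t)=\hat v(f(x,t),t)$ with $f(x,1)=x$, and set $u_{\text{far}}(x):=f(x,\lambda)$. The boundary-tangency property of $\hat v$ just verified makes $f(a_i+R_ie^{i\theta},t)=z_i(t)+r_i(t)e^{i\theta}$ and $f(R_0e^{i\theta},t)=tR_0e^{i\theta}$; evaluation at $t=\lambda$ yields both $u_{\text{far}}(x)=\lambda x$ on $\partial\mathcal{B}$ and the prescribed trace on each $\partial B_{R_i}(a_i)$, using $r_i(\lambda)=\sqrt{L_i(\lambda)^2+R_i^2}$ and $z_i(\lambda)$ as the final cavity centers. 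Incompressibility $\det Du_{\text{far}}\equiv1$ follows from $\tfrac{\mathrm{d}}{\mathrm{d}t}\det Df=(\div\hat v)\circ f\cdot\det Df=0$ together with $\det Df(\cdot,1)=1$. Theorem \ref{th:Schauder} furnishes $\sup_t\|Dv(\cdot,t)\|_{L^{\infty}(E(t))}<\infty$, and the explicit formula for $w$ combined with $r_i(t)\geq d$ and the $C^1$ regularity of the evolutions $z_i$ gives the analogous bound for $D\tilde v$. The Gronwall computation \eqref{eq:H1a}--\eqref{eq:H1b} then delivers $u_{\text{far}}\in H^1$. Finally, since $\hat v$ is globally Lipschitz on $\overline{E(t)}$ uniformly in $t$, the flow is a bi-Lipschitz diffeomorphism from $\overline{E(1)}$ onto $\overline{E(\lambda)}$, so $u_{\text{far}}$ is a homeomorphism with $\det Du_{\text{far}}>0$ and condition (INV) holds automatically (the topological image of any small ball coincides with its geometric image).

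The principal obstacle is the uniform-in-$t$ bound on $\|D\hat v(\cdot,t)\|_{L^{\infty}(E(t))}$; without it, neither the $H^1$ estimate nor the well-posedness of the flow up to the boundary could be obtained. This bound reduces to Schauder estimates for the Neumann problem on the evolving family of perforated domains, and it is precisely to track the geometric dependence of those estimates (showing that the relevant constants depend only on $d$ and $r_{max}$, not on the number or position of the holes beyond these quantities) that Section \ref{se:Schauder} was developed and Theorem \ref{th:Schauder} proved.
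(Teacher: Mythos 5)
Your construction coincides with the paper's own proof in every essential respect: the same Dacorogna--Moser field $\hat v=v+\tilde v$ from \eqref{eq:defDM} and \eqref{eq:defVtilde}, the same flow with $u_{\text{far}}(x)=f(x,\lambda)$, the boundary-tangency argument for the Dirichlet data on $\partial\mathcal B$ and on $\partial B_{R_i}(a_i)$, the Liouville identity for $\det Du_{\text{far}}\equiv 1$, and Theorem \ref{th:Schauder} plus the explicit bound on $D\tilde v$ fed into the Gronwall computation \eqref{eq:H1a}--\eqref{eq:H1b} for the $H^1$ estimate. The one point where you genuinely diverge is condition (INV): the paper obtains injectivity a.e.\ from Ball's global invertibility theorem \cite{Ball81} and then condition (INV) from the energy estimate via \cite[Lemma 5.1]{BHM17}, whereas you assert that the flow is a bi-Lipschitz homeomorphism of $\overline{E(1)}$ onto $\overline{E(\lambda)}$ and read off (INV) from injectivity and degree theory. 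Your route is sound in principle, but it commits you to more than the paper's written argument does: you need $\hat v(\cdot,t)$ to be Lipschitz up to $\partial E(t)$ uniformly in $t$ (Theorem \ref{th:Schauder} gives the uniform $L^\infty$ gradient bound, and on these non-convex perforated domains one must also invoke the uniformly controlled geodesic-to-Euclidean distance comparison) together with enough regularity in $t$ for a classically well-posed, uniquely solvable flow on the moving domains; the paper defers precisely these ODE issues to \cite{DaMo90} and the continuous dependence of the Neumann solutions on $E(t)$, and then deliberately concludes (INV) from the weaker, rigorously established $H^1$ information rather than from pointwise bi-Lipschitz invertibility of the flow. If you make the uniform boundary Lipschitz bound and the time regularity explicit, your shortcut for (INV) is a perfectly acceptable, arguably cleaner, alternative.
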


\begin{proof}
 Let $E(t)$, $v(\cdot, t)$, and $\tilde v(\cdot, t)$ be as in \eqref{eq:domainsEt}, \eqref{eq:defDM}, and \eqref{eq:defVtilde},
 respectively.
 Observe that \begin{align}
    & \div {\tilde v}_{t}\equiv 0\text{ in } E(t)\\
    & {\tilde v}_{t}(y) = \frac{\dd z_i(t)}{\dd t}\text{ on } \partial B_{r_i(t)}(z_i(t)),
  \end{align}
  where for $i=0$ the center $z_0(t)$ is the origin and 
  the outer radius $r_0(t)$ is $tR_0$.
  As for the derivative, 
  \begin{align*}
    \|D{\tilde v}_t\|_\infty &=\max_i \Bigg \|
    \left ( d^{-2} \eta'' e^{i\theta}\otimes e^{i\theta}     
    + (dr)^{-1}\eta' ie^{i\theta}\otimes ie^{i\theta}\right ) \frac{\dd z_i(t)}{\dd t} \cdot (rie^{i\theta})
    \\ &
    \hspace{18em}
      + d^{-1}\eta' \left (  \frac{\dd z_i(t)}{\dd t}\right )^\perp \otimes e^{i\theta}
    \Bigg \|_\infty
    \\
    &\leq C(d^{-2}\cdot (\lambda R_0)+d^{-1}) \left |\frac{\dd z_i(t)}{\dd t}\right |,
  \end{align*}
  which is bounded uniformly in $t$ since $z_i\in C^1([1,\lambda],\R^2)$.
  
  For every $x\in \mathcal{B}\setminus \bigcup_1^n B_{R_i}(a_i)$ and every $t\in [1,\lambda]$ 
  let
  $f(x,t)$ be the solution of the Cauchy problem
  \begin{align}
   \begin{aligned}
     & \frac{\partial f}{\partial t} (x, t) = v_t(f(x,t)) + {\tilde v}_t(f(x,t))\\
     & f(x, 1) = x.
   \end{aligned}
  \end{align}
  It can be seen (as in Dacorogna \& Moser \cite{DaMo90})
  that the above ODE indeed
  has a well defined solution with enough regularity in time and space
  (in spite of the fact that the velocity fields are defined in changing domains).
  The  problem solved for $v$ in Theorem \ref{th:Schauder}, namely,
  \begin{align*}
    &\div v\equiv 0\ \text{in }E(t),\\
    &v(z_i(t)+r_i(t) e^{i\theta}) = \frac{\dd r_i}{\dd t}e^{i\theta},\quad i\in \{1,\ldots, n\},\ \theta\in [0, 2\pi],
  \end{align*}
  has infinite solutions and a wild (in time) selection is admissible in principle. 
  However, in the case at hand, 
  we do have a continuous dependence of the fields $v$ with respect to $t$
  since we are working with a very particular solution of the problem, namely,
  that of Dacorogna \& Moser. This solution is given, in terms of $\phi$ and $\varphi$,
  by \eqref{eq:defDM}, and both $\phi$ and $\varphi$ are uniquely determined as 
  the unique solution of two coupled well-posed Neumann problems for the Laplacian
  (where the solutions can be proved to depend continously on smooth changes of the domain $E(t)$). 

  Note also that 
  $$ f(a_i + R_i e^{i\theta}, t) = z_i(t) + r_i(t) e^{i\theta}\quad \forall\, i, \theta$$
  and $$f(R_0 e^{i\theta}, t) = tR_0 e^{i\theta}$$
  thanks to the boundary conditions for $v_t$ and ${\tilde v}_t$.
  Define $u_{\text{far}}$ by $$u_{\text{far}}(x):= f(x, \lambda), \quad x\in   \mathcal{B}\setminus \bigcup_1^n B_{R_i}(a_i).$$
  For every $i\in\{i,\ldots, n\}$ and $\theta\in [0,2\pi]$
  $$ 
  u_{\text{far}}(a_i+R_ie^{i\theta})= z_i(\lambda) + \sqrt{L_i(\lambda)^2+R_i^2} e^{i\theta}$$
  since $r_i(\lambda)= \sqrt{L_i(\lambda)^2+R_i^2}$. Also $u_{\text{far}}(x)=\lambda x$ on $\partial \mathcal{B}$.
  
  The resulting deformation $u_{\text{far}}$ is incompressible because
  \begin{align*}
    \frac{\partial}{\partial t} \det D_x f(x,t) &= \cof D_x f(x,t) \cdot D_x \frac{\partial f}{\partial t} (x,t )
    \\ &= \cof D_x f(x,t) \cdot D_x ((v_t+{\tilde v}_t)\circ f)(x,t)
    \\ &= \cof D_x f(x,t) \cdot (D_y(v_t+{\tilde v}_t)(f(x,t))D_x f(x,t))
    \\ &= (\cof D_x f(x,t)(D_x f(x,t))^T)\cdot D_y(v_t+{\tilde v}_t)(f(x,t))
    \\ &= (\det D_x f(x,t)) I\cdot D_y(v_t+{\tilde v}_t)(f(x,t))
  \end{align*}
  and the right-hand side is zero since $\div (v_t+{\tilde v}_t)\equiv 0$. 
  
  That $u_{\text{far}}$ belongs to $H^1$ can be seen from the calculations in 
  \eqref{eq:H1a}--\eqref{eq:H1b}, Theorem \ref{th:Schauder}, and 
  that $\|D\tilde v_t\|_{L^\infty(E(t)}$ is bounded in time, as already established.
  
   Finally, Ball's global invertibility theorem \cite{Ball81} shows that $u_{ext}$ is one-to-one a.e.\ 
   which combined with the previous energy estimate and \cite[Lemma 5.1]{BHM17}
   yields that $u_{\text{far}}$ satisfies condition INV.
  
\end{proof}

In the next theorem we finally state the existence of an incompressible deformation creating round cavities
of areas $v_1,\ldots, v_n$ from the cavitation points $a_1, \ldots, a_n$. 
It is an immediate consequence of Proposition \ref{pr:scission-near} and Theorem \ref{th:scission-away}.

\begin{mainthm} \label{th:round}
Let $n\in \N$, $R_0>0$, and ${\mathcal{B}}:=B_{R_0}(0)\subset \R^2$. 
Suppose that the configuration $\Big ( (a_i)_{i=1}^n, (v_i)_{i=1}^n \Big )$ is attainable.
Let $\lambda>1$ be given by $\sum v_i=(\lambda^2 -1)\pi R_0^2$.
Then there exists 
$u\in \bigcap_{1\leq p < 2} W^{1,p}({\mathcal{B}},\R^2) \cap H^1_{\loc}({\mathcal{B}}\setminus \{a_1, \ldots, a_n\},\R^2)$
satisfying
\begin{itemize}
  \item the invertibility condition (INV) of Definition \ref{de:INV};
 \item $u(x)=\lambda x$ for $x\in \partial \mathcal{B}$;
 \item $\det Du(x)=1$ for a.e.\ $x\in \mathcal B$;
  \item the cavities $\imT(u,a_i)$ (as defined in Definition \ref{de:imT}) are disks of areas $v_i$, for all $i\in \{1,\ldots, n\}$;
  \item there exists a constant $C=C\big (n,R_0, (a_i)_{i=1}^n, (v_i)_{i=1}^n \big )$ such that for all $\eps>0$
  \begin{align}
    \label{eq:energyDM}
    \int_{\mathcal B \setminus \bigcup_1^n \overline{B}_\eps (a_i)} \frac{|Du|^2}{2}dx \leq 
    C + \left ( \sum_{i=1}^n v_i \right ) |\log \eps|.
  \end{align}

\end{itemize}

\end{mainthm}

\section{Radial symmetry of the cavities opened by the minimizers}
\label{se:coalescence}

\subsection{The case of prescribed final areas}

In this section we prove that the actual energy minimizers (and not just
the test function constructed in Theorem \ref{th:round})
also opens cavities that are circular in the $\eps\to 0$ limit.
Since the result is based on Theorem \ref{th:round}, 
it holds provided 
the cavitation configuration $\Big ( (a_i)_{i=1}^n , (v_i)_{i=1}^n\Big )$
is attainable 
through an evolution of circular cavities (Definition \ref{de:attainable}).
Since a cavitation configuration is taken to be what determines the load
being exerted on the body (as explained in Section \ref{se:physical_motivation}),
we interpret the theorem as saying that the loads satisfying the conditions
in Definition \ref{de:attainable}
are not yet
large enough to trigger the coalescence of the cavities.

\begin{mainthm} \label{th:main}
  Let $n\in \N$ and ${\mathcal{B}}:=B_{R_0}(0)\subset \R^2$.
  Suppose that the configuration 
  $\Big ( (a_i)_{i=1}^n, (v_i)_{i=1}^n \Big )$ is attainable.
  Let $\lambda>1$ be given by $\sum_1^n v_i = (\lambda^2 -1)\pi R_0^2$.
  Let $\eps_j\to 0$ be a sequence that we will denote in what follows simply by $\eps$. 
  Set ${\mathcal{B}}_\eps:= {\mathcal{B}} \setminus \bigcup_{i=1}^n \overline{B}_\eps(a_i)$.
  Assume that for every $\eps$ the map $u_\eps$ minimizes $\int_{{\mathcal{B}}_\eps} |Du|^2\dd x$
  among all $u\in H^1({\mathcal{B}}_\eps;\R^2)$ satisfying
  \begin{itemize}
   \item the invertibility condition (INV) of Definition \ref{de:INV};
   \item $u(x)=\lambda x$ for $x\in \partial {\mathcal{B}}$;
   \item $\det Du(x)=1$ for a.e.\ $x\in{\mathcal{B}_\eps}$;
   \item and $|\imT(u, B_{\eps}(a_i))|=v_i +\pi \eps^2$ for all $i\in \{1,\ldots, n\}$.
  \end{itemize}
  Then there exists a subsequence (not relabelled) and 
  $u\in \bigcap_{1\leq p < 2} W^{1,p}({\mathcal{B}},\R^2) \cap H^1_{\loc}({\mathcal{B}}\setminus \{a_1, \ldots, a_n\},\R^2)$
  such that 
  \begin{itemize}
   \item $u_\eps\weakc u$ in $H^1_{\loc} ({\mathcal{B}}\setminus \{a_1, \ldots, a_n\},\R^2)$;
   \item $\Det Du_\eps \weakcs \Det Du$ in ${\mathcal{B}}\setminus \{a_1, \ldots, a_n\}$;
   locally in the sense of measures (where $\Det Du$ is the distributional Jacobian of 
   Definition \ref{de:DetDu});
   \item $\Det Du = \sum_{i=1}^n v_i \delta_{a_i} + \mathcal L^2$ in ${\mathcal{B}}$ (where $\mathcal L^2$ is
   the Lebesgue measure);
   \item The cavities $\imT(u, a_i)$ (as defined in Definition \ref{de:imT}) are disks of area $v_i$, for all $i\in \{1,\ldots,n\}$;
    \item $|\imT(u_\eps, B_{\eps}(a_i)) \triangle \imT(u,a_i)|\to 0$ as $\eps\to 0$ for $i\in\{1,\ldots,n\}$.
  \end{itemize}
\end{mainthm}

\begin{xrem}
The conclusions of the theorem are the same as those in \cite[Thm.\ 1.9]{HeSe13}; 
the reason is that the former is obtained by applying the latter.
 What differs is that the conclusions are obtained under a different set of hypotheses.

 The main assumption  in \cite[Thm.\ 1.9]{HeSe13} is that a
constant $C$ (independent of $\eps$) exists such that 
  \begin{align} \label{eq:uBound}
      \int_{{\mathcal{B}}_\eps} \frac{|Du_\eps|^2}{2}\dd x \leq 
  C + \left ( \sum_{i=1}^n v_i \right ) |\log \eps|.
  \end{align}
  Recall that the cost of opening round cavities of areas $v_1, \ldots, v_n$
  is $(\sum v_i) |\log \eps|$.
  In a sense, 
  this is to be expected since the singularity in the gradient of a map creating a cavity from a single point $a\in {\mathcal{B}}$
is at least of the order of 
  \begin{align*}
    |Du(x)|\sim \frac{L}{r},
    \quad
    r=|x-a|
\end{align*}
where $L$ is such that $ \pi L^2$ equals  the area of the created cavity. 
In light of \eqref{eq:ameliore}, condition \eqref{eq:uBound}
yields that all the distortions are zero, hence all cavities
are round, as stated in the theorem.
From \eqref{eq:ameliore} we see that 
leaving the space of deformations that open only round cavities
comes with an energetic cost of order $|\log \eps|$ (in addition to the
$\sum_i v_i |\log \eps|$ common to all maps in the admissible space).
Therefore,
the elongation and coalescence of voids corresponds to \emph{a 
higher energy regime};
condition \eqref{eq:uBound}, 
in contrast, 
characterizes the lowest energy regime 
where the Dirichlet enegy blows up at no more that
the stated rate of $\sum_i v_i |\log \eps|$,
which corresponds to loads not large enough so as to 
initiate the merging of cavities. 

The natural question arising from \cite[Thm.\ 1.9]{HeSe13}
is what load configurations lie in the lowest energy regime 
\eqref{eq:uBound}.
As mentioned in Section \ref{se:Int_DM},
Henao \& Serfaty \cite{HeSe13} answered this for the case of two cavities,
using explicit constructions of incompressible maps opening cavities
of all possible sizes from a pair of arbitrary cavitation points.
The novelty on this work is that we now solve the nonlinear equation of incompressibility 
for an arbitrarily large number of cavities, using instead 
the flow of Dacorogna \& Moser \cite{DaMo90} (see Section \ref{se:round}).
\end{xrem}

\begin{proof}[Proof of Theorem \ref{th:main}]
  By Theorem \ref{th:round} there exists $u$, defined in all of $\mathcal B \setminus \{a_1, \ldots, a_n\}$,
  which is a radially symmetric cavitation in a neighbourhood of each $a_i$. 
  For each $\eps>0$ let ${\tilde u}_\eps$ denote the restriction of $u$ to $\mathcal B_\eps$.
  Since the sequence $({\tilde u}_\eps)_\eps$ clearly fulfils \eqref{eq:uBound},
  and since $\int |D u_\eps| \leq \int |D {\tilde u}_\eps|$ (because, by hypothesis, the $u_\eps$ are
  energy minimizers), the sequence $(u_\eps)_\eps$ also satisfies \eqref{eq:uBound}.
  The result then follows by applying the arguments \cite[Thm. 1.9]{HeSe13}.
\end{proof}

\subsection{Lower bound for the coalescence load}

  As mentioned in p.\ \pageref{obstruction}, the problem of interest is 
to understand how cavities will continue to evolve once they have attained a certain 
size of order 1 (that is, a size much larger than the one at the rest state -or at the onset of fracture).
Motivated by this more realistic problem,
in this section we consider a constraint of the form 
\begin{align} \label{eq:upsilon}
  v_i\geq \upsilon_i \quad \forall i \in \{1,\ldots, n\}
\end{align}
for the final areas 
$$v_i:= \lim_{\eps\to 0} |\imT(u, B_\eps(a_i))|$$
of the cavities, with minimum areas $\upsilon_i>0$ that are specified a priori.
Note that this is  important also in light of Proposition \ref{pr:n=1}: if 
it is possible for all except one of the $v_i$ to be equal to zero,
then nothing regarding void coalescence can be deduced from our analysis.
Theorem \ref{th:main} treats the problem of opening cavities of prespecified areas $v_1, \ldots, v_n$.
Here, in contrast, the Dirichlet energy will be minimized 
in the space $\mathcal A_\eps$ of maps  
$u\in H^1 (\mathcal B_\eps; \R^2)$ satisfying
\begin{align} \label{eq:Aeps}
    \begin{aligned}
     \bullet\  & \text{the invertibility condition (INV) of Definition \ref{de:INV};}
     \\
     \bullet\  & u(x)=\lambda x\ \text{for}\ x\in \partial \mathcal B;
     \\
     \bullet\  & \det D u(x)=1\ \text{for a.e.}\ x\in \mathcal B_\eps;
     \\
     \bullet\  & |\imT(u, B_\eps(a_i))| \geq \upsilon_i + \pi \eps^2\ \text{for all}\ i\in \{1,\ldots, n\}.
    \end{aligned}
 \end{align}
For these variational problems the result can finally be phrased in terms only 
of the displacement of the outer boundary.

\begin{mainthm} \label{pr:example_attainable}
      Let $n\in \N$, $\mathcal B=B_{R_0}(0)\subset \R^2$, and $a_1, \ldots, a_n\in \mathcal{B}$ be given. 
    Let $\overline{B}_{d_1}(a_1)$, \ldots, $\overline{B}_{d_n}(a_n)$ be a disjoint collection 
    of closed balls contained in $\mathcal B$.
    Let $\sigma^*:=\frac{\sum_k \pi d_k^2}{\pi R_0^2}$ denote its associated packing density.
    Let $\upsilon_1,\ldots, \upsilon_n>0$ be given and suppose that
    \begin{align} \label{eq:upsilonZ}
     \upsilon_i < \pi d_i^2 \cdot \frac{1}{1-\sigma^*} \text{ for each } i\in \{1,\ldots, n\}.
    \end{align}
    Then there exists $\lambda_0\in (1, \frac{1}{\sqrt{1-\sigma^*}})$ such that given any 
    \begin{align} \label{eq:criticalLoad}
     \lambda_0\leq \lambda <\frac{1}{\sqrt{1-\sigma^*}};
    \end{align}
    any sequence $\eps_j\to 0$ (which we denote in what follows simply by $\eps$);
    and any sequence $(u_\eps)_\eps$ of minimizers of $\int_{\mathcal B_\eps} |Du|^2 dx$
    in the spaces $\mathcal A_\eps$ of \eqref{eq:Aeps};
    all of the conclusions of Theorem \ref{th:main} hold
    (in particular, the maps $u_\eps$ tend to produce only round cavities in the limit as $\eps\to 0$).
\end{mainthm}

\begin{remarks}
  \begin{enumerate}
   \item Based on the discussions of this section, we interpret the value 
  of $\lambda$ found in \eqref{eq:criticalLoad},
  namely, $\left (\displaystyle 1-\frac{\sum_k \pi d_k^2}{\pi R_0^2}\right )^{-\frac{1}{2}}$,
  as a lower bound for the coalescence load for this problem. 
  \item Thinking of a quasistatic loading, the theorem says that even if $n$ cavities have already formed
  and grown inside the body, it is still possible to continue loading it without entering the stage of void coalescence
  provided that their current radii $\sqrt{\frac{\upsilon_i}{\pi}}$ are less than $\frac{d_i}{\sqrt{1-\sigma^*}}$. 
  As mentioned at the end of Section \ref{se:Int_coalescence}, this suggests that if even one of the cavities has not yet attained
  that characteristic size then no coalescence should be expected (because that cavity still has room to  grow
  as a round cavity, sustaining itself the global effect of the increment in the external load).
  \item Observe that $\frac{1}{1-\sigma^*}\to\infty$ as $\sigma^*\to 1^-$. This has an effect 
  both on the coalescence load (which is larger than $\sqrt{\frac{1}{1-\sigma^*}}$) and
  on the critical final radius $\frac{d_i}{\sqrt{1-\sigma^*}}$ for a circular cavity.
  This suggests that
  the energetically most favourable situation is when the space available in the reference configuration
  $B_{R_0}(0)$ is optimally distributed among all the balls $B_{d_i}(a_i)$. This occurs either
  when the body opens only one cavity, or  at the other end when the body opens 
  a larger and larger number of smaller cavities. The second possibility is more realistic, due to 
  the dynamic and irreversible nature of the fracture processes and
  due to local vs.\ global minimization considerations. 
  What prevents 
  an arbitrarily large number of cavities from being created are the energies required for fracture
  (see \cite{Mora14}) and the tension associated to the presence of an ever increasing inner surface
  (which is especially large since what matters is its state in 
  the deformed configuration, as pointed out by M\"uller \& Spector \cite{MuSp95}).
  \end{enumerate}  
\end{remarks}

\begin{proof}
  We proceed as in the proof of Theorem \ref{th:main}, except that now 
  $v_1,\ldots, v_n$ are to be found such that $v_i\geq \upsilon_i$ for all $i$
  and the configuration   $\Big ( (a_i)_{i=1}^n, (v_i)_{i=1}^n \Big )$ is attainable. 
  Choose the $v_1, \ldots, v_n$ given by \eqref{eq:proportional2}.
  Thanks to \eqref{eq:max_lambda}, $\lambda<\frac{1}{\sqrt{1-\sigma^*}}$ is enough to ensure
  that the configuration is attainable. 
  From \eqref{eq:proportional2} we also see that $v_i\geq \upsilon_i$ if and only if
  $\lambda^2 \geq 1+ \frac{\upsilon_i}{\pi d_i^2} \sigma^*$.
  This holds for each $i$ if and only if 
  $$ \lambda \geq  \lambda_0:= \sqrt{1 + \left ( \max_i \frac{\upsilon_i}{\pi d_i^2} \right ) \sigma^*}.$$
  Note, in turn, that $\lambda_0 < \frac{1}{\sqrt{1-\sigma^*}}$
  (which is necessary for \eqref{eq:criticalLoad} to be meaningful)
  if and only if \eqref{eq:upsilonZ} is satisfied. 
  The conclusion now follows 
  by applying the arguments in \cite[Thm.\ 1.9]{HeSe13}; 
  the hypothesis on the blow-up rate of the energy (as $\eps\to 0$) 
  is satisfied thanks to Theorem \ref{th:round}.
\end{proof}

\subsection*{Acknowledgements}

We are indebted to Sergio Conti, Mat\'ias Courdurier, Manuel del Pino,
Robert Kohn, Giuseppe Mingione, 
Tai Nguyen and Sylvia Serfaty for our discussions 
and their suggestions. 
We are also grateful to the anonymous referees for helping us to significantly 
improve the presentation.
This research was supported by 
the FONDECYT projects 1150038 and 1190038 of the Chilean Ministry of Education 
and by the
Millennium Nucleus Center for Analysis of PDE NC130017 
of the Chilean Ministry of Economy.
D.H.\ thanks the City Library of San Fernando, VI Regi\'on, where part of this work was carried out, 
for their warm hospitality.

\bibliography{biblio} \bibliographystyle{alpha}

\end{document}